\tikzstyle{vertex}=[circle,draw=black,fill=black,inner sep=0,minimum size=3pt,text=white,font=\footnotesize]
\newtheorem{thm}{Theorem}[section]
\newtheorem{lemma}[thm]{Lemma}
\newtheorem{proposition}[thm]{Proposition}
\newtheorem{clm}[thm]{Claim}
\newtheorem*{lemma*}{Lemma}
\newtheorem*{proposition*}{Proposition}
\newtheorem*{theorem*}{Theorem}
\newcommand\ex{\ensuremath{\mathrm{ex}}}
\newcommand\cH{{\mathcal H}}
\newcommand\cN{{\mathcal N}}
\newcommand\cP{{\mathcal P}}
\newcommand\cQ{{\mathcal Q}}
\newcommand\cS{{\mathcal S}}
\newcommand\cT{{\mathcal T}}
\def\marrow{{\boldmath {\marginpar[\hfill$\rightarrow \rightarrow$]{$\leftarrow \leftarrow$}}}}
\def\gd#1{{\sc GDANI: }{\marrow\sf #1}}
\def\pb#1{{\sc BALAZS: }{\marrow\sf #1}}
\newcommand{\ignore}[1]{}
\title{Generalized Tur\'an results for disjoint cliques}
\author{Dániel Gerbner\\ \small Alfr\'ed R\'enyi Institute of Mathematics\\
\small \texttt{gerbner.daniel@renyi.hu}}
\date{}
\begin{document}

\maketitle

\begin{abstract} The generalized Tur\'an number $\ex(n,H,F)$ is the largest number of copies of $H$ in $n$-vertex $F$-free graphs. We denote by $tF$ the vertex-disjoint union of $t$ copies of $F$.
Gerbner, Methuku and Vizer in 2019 determined the order of magnitude of $\ex(n,K_s,tK_r)$. We extend this result in three directions. First, we determine $\ex(n,K_s,tK_r)$ exactly for sufficiently large $n$. Second, we determine the asymptotics of the analogous number for $p$-uniform hypergraphs. Third, we determine the order of magnitude of $\ex(n,H,tK_r)$ for every graph $H$, and also of the analogous number for $p$-uniform hypergraphs.
\end{abstract}

\section{Introduction}

Given a graph $F$, $\ex(n,F)$ denotes the largest number of edges in $F$-free $n$-vertex graphs. The study of this function was initiated by Tur\'an \cite{T}, who showed that $\ex(n,K_{r})=|E(T(n,r-1))|$, where the Tur\'an graph $T(n,k)$ is the complete $k$-partite graph with each part of order $\lfloor n/k\rfloor$ or $\lceil n/k\rceil$. The Erd\H os-Stone-Simonovits Theorem \cite{ES1966,ES1946} states that if $F$ has chromatic number $r>2$, then $\ex(n,F)=(1+o(1))|E(T(n,r-1))|$. 
Moon \cite{moon} considered forbidding multiple vertex-disjoint copies of a clique. Given graphs $G,G'$ with disjoint vertex sets, let $G+G'$ denote the graph with vertex set $V(G)\cup V(G')$ and edge set $E(G)\cup E(G')\cup \{uv: u\in V(G), v\in V(G')\}$. Moon \cite{moon} showed that $\ex(n,tK_{r})=|E(K_{t-1}+T(n-t+1,r-1))|$.

A natural generalization of the above problems is counting other subgraphs instead of edges. Let $\cN(H,G)$ denote the number of copies of $H$ in $G$. Let $\ex(n,H,F)$ denote the largest $\cN(H,G)$ in $F$-free $n$-vertex graphs. The first such result is due to
Zykov \cite{zykov}, who showed that $\ex(n,K_s,K_r)=\cN(K_s,T(n,r-1))$. After several sporadic results on $\ex(n,H,F)$, the systematic study of these so-called generalized Tur\'an problems was initiated by
Alon and Shihelman \cite{AS}. One of their observation is that while $\ex(n,F)$ and $\ex(n,2F)$ are close to each other, the number of copies of other graphs can be far from each other if we forbid $F$ or $2F$. For example $\ex(n,K_3,C_5)=\Theta(n^{3/2})$, while $\ex(n,K_3,2C_5)=\Theta(n^2)$. This phenomenon was further examined by Gerbner, Methuku and Vizer \cite{gmv}. One of their main theorems is that $\ex(n,K_s,tK_r)=\Theta(n^{\lceil \frac{tr-s}{t-1}\rceil-1})$. We will improve this result.

Wang \cite{wang} determined $\ex(n,K_s,tK_2)$ for every $n,s,t$. Liu and Wang \cite{LW2} determined $\ex(n,K_r,2K_r)$ for sufficiently large $n$, while Gerbner and Patk\'os \cite{gp} determined $\ex(n,K_s,2K_r)$ for sufficiently large $n$. Yuan and Yang \cite{yy} obtained a threshold on $n$, and determined $\ex(n,K_3,2K_3)$ for every $n$. A result of Gerbner \cite{ger3} implies an exact result on $\ex(n,K_s,tK_r)$ in the case $s<r$ for sufficiently large. We do not state these results, as our results generalize each of them, in the case $n$ is sufficiently large.

Let us turn to hypergraphs now. We denote by $K_r^p$ the $p$-uniform complete hypergraph on $r$ vertices. Analogously to the graph case, given $p$-graphs $H$ and $F$, we denote by $\ex(n,F)$ the largest number of hyperedges in $F$-free $n$-vertex $p$-graphs, and by $\ex(n,H,F)$ the largest number of copies of $H$ in $F$-free $n$-vertex $p$-graphs. Even the asymptotics of $\ex(n,K_r^p)$ is unknown for every $2<p<r$. The exact value of $\ex(n,tK_p^p)$ is known for sufficiently large $n$. The optimal threshold on $n$ is stated in a conjecture of Erd\H os \cite{emc} and attracted a lot of researchers, see e.g. \cite{kk} and the references in it.

Liu and Wang \cite{LW} determined $\ex_r(n,K_s^r,tK_r^r)$ for every values of $s$, $r$, $t$ and sufficiently large $n$. We will use the following special cases of their results.

 \begin{lemma}[Liu and Wang \cite{LW}]\label{hgt} Let $s> t(r-1)$. Then 
we have that $\ex_r(n,K_s^r,tK_r^r)=\binom{tr-1}{s}$ for $n\ge tr-1$. Moreover, if $\cH$ is an $r$-uniform $tK_r^r$-free hypergraph with $\binom{tr-1}{s}$ copies of $K_s^r$, then $\cH=K_{tr-1}^r$.
\end{lemma}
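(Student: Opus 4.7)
My plan is to proceed by induction on $t$. The base case $t=1$ is immediate: the hypothesis $s>r-1$ gives $s\geq r$, so every $K_s^r$ contains a copy of $K_r^r$, and a $K_r^r$-free hypergraph therefore has $0=\binom{r-1}{s}$ copies of $K_s^r$.

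For the inductive step, fix a $tK_r^r$-free $r$-uniform hypergraph $\cH$ on $n\geq tr-1$ vertices, and let $M$ be a maximum $K_r^r$-matching in $\cH$, with $V_M=V(M)$; by hypothesis $|M|\leq t-1$, so $|V_M|\leq (t-1)r$. By the maximality of $M$, the induced subhypergraph on $V(\cH)\setminus V_M$ is $K_r^r$-free, hence every copy of $K_s^r$ in $\cH$ uses at most $r-1$ vertices outside $V_M$. Combined with $s>t(r-1)$, each such copy uses at least $s-r+1>(t-1)(r-1)$ vertices from $V_M$; in particular, when $|M|=t-1$, each copy misses at most $t-2$ vertices of $V_M$.

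The key step is to produce a single set $W$ of size at most $tr-1$ containing every vertex used by any $K_s^r$-copy of $\cH$; this immediately gives $\cN(K_s^r,\cH)\leq\binom{|W|}{s}\leq\binom{tr-1}{s}$. I would prove the existence of $W$ via an exchange argument on $M$. If a copy $K$ of $K_s^r$ uses a vertex $v$ outside $V_M$, then the at least $(t-1)(r-1)+1$ vertices of $K\cap V_M$ distribute among the $t-1$ blocks of $M$, so by pigeonhole some block $B$ contains $\geq r-1$ of them; together with $v$ they form a new $K_r^r$ that can replace $B$ in $M$, freeing one vertex of $B$. Iterating this trick using a second $K_s^r$-copy $K'$ with a different outside vertex $v'$, and eventually extracting from the freed $V_M$-vertices together with the outside parts of $K,K'$ a further $K_r^r$ disjoint from the rearranged matching, yields $tK_r^r$ in $\cH$, a contradiction. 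The main obstacle is executing this exchange consistently across different copies: one must carefully track which vertices are released by each swap and verify that the modified configuration remains a valid vertex-disjoint collection of $K_r^r$'s, which requires a case analysis on how $K\cap V_M$ and $K'\cap V_M$ distribute across the blocks of $M$.

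For the uniqueness clause, equality $\cN(K_s^r,\cH)=\binom{tr-1}{s}$ forces $|W|=tr-1$ and every $s$-subset of $W$ to span a $K_s^r$-copy, which by a Kruskal--Katona-type shadow argument forces every $r$-subset of $W$ to be a hyperedge, so $\cH[W]=K_{tr-1}^r$. Any further hyperedge of $\cH$ would then combine with $t-1$ of the blocks of $M$ (suitably chosen) to produce $t$ vertex-disjoint $K_r^r$'s, contradicting $tK_r^r$-freeness; hence $\cH=K_{tr-1}^r$.
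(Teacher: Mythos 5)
First, a framing point: the paper never proves Lemma~\ref{hgt} at all --- it is imported verbatim from Liu and Wang \cite{LW} --- so there is no internal proof to compare against, and your attempt must stand on its own. Its peripheral parts do: the base case $t=1$; that a maximum $K_r^r$-matching $M$ has at most $t-1$ blocks; that every $K_s^r$-copy has at most $r-1$ vertices outside $V_M$ and hence more than $(t-1)(r-1)$ inside; and, \emph{granted} the kernel $W$, the uniqueness endgame is correct (you do not even need Kruskal--Katona: every $r$-subset of $W$ lies in some $s$-subset of $W$, which spans a copy, so it is a hyperedge; and any hyperedge $e\not\subseteq W$ meets $W$ in at most $r-1$ vertices, leaving at least $(t-1)r$ vertices of $W$ to host $t-1$ disjoint hyperedges, yielding $tK_r^r$). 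The genuine gap is that the ``key step'' --- a single set $W$ with $|W|\le tr-1$ covering all copies --- is essentially the entire content of the theorem, and your exchange sketch does not deliver it. Each swap trades one outside vertex for one freed block-vertex, but a $K_r^r$ is guaranteed only inside the vertex set of a \emph{single} $K_s^r$-copy: the vertices freed by different swaps come from different copies $K,K',\dots$, and an $r$-set mixing vertices of $K$ and $K'$ need not be a hyperedge. So the final extraction of ``a further $K_r^r$ from the freed $V_M$-vertices together with the outside parts of $K,K'$'' is unjustified (already for $r=2$ the freed vertices need not be adjacent), and the case analysis you explicitly defer is exactly the missing proof. Note also that the induction on $t$ you set up is never actually invoked in the inductive step.

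To calibrate how much is recoverable: for \emph{two} copies the claim is provable by a short defect-Hall/greedy computation. If $|V(K_1)\cup V(K_2)|\ge tr$, one can pack $i$ disjoint $r$-sets inside $V(K_1)$ and $t-i$ inside $V(K_2)$ (each then a hyperedge, since the copies are complete): filling private vertices first shows the constraints $ir\le s$, $(t-i)r\le s$ and $tr\le |V(K_1)\cup V(K_2)|$ suffice, and an admissible integer $i$ exists because $s>t(r-1)$ gives $2s/r-t\ge 1$, equivalently $(t-1)(r-2)\ge 0$. Hence any two copies span at most $tr-1$ vertices. But pairwise bounds do not imply the global bound: many copies can pairwise overlap heavily in a common core while their private vertices accumulate, so upgrading the two-copy statement to the global kernel $W$ (say, by taking a minimal subfamily of copies whose union reaches $tr$ and verifying a Hall-type condition for packing $t$ cliques into the pools $V(K_i)$) is real work, not bookkeeping. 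As written, your proposal is a plausible plan whose decisive lemma is asserted rather than proved.
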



Now we are ready to state our results.

\begin{thm}\label{exac}
Let $tr>s\ge r\ge 2$ and $t\ge 1$ be arbitrary integers and let $x=\lceil \frac{tr-s}{t-1}\rceil-1$. Then for sufficiently large $n$ we have $\ex(n,K_s,tK_r)=\cN(K_s,K_{t(r-x)-1}+T(n-t(r-x)+1,x))$.
\end{thm}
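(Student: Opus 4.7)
The plan is to verify the lower bound by construction and establish the upper bound via a stability result followed by local-swap ``cleaning'' steps.

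For the lower bound, the graph $G_0 := K_{t(r-x)-1} + T(n-t(r-x)+1, x)$ is $tK_r$-free: since $T(n-t(r-x)+1,x)$ is $K_{x+1}$-free, every copy of $K_r$ in $G_0$ uses at least $r-x$ vertices of the central clique of size $t(r-x)-1$, so $t$ vertex-disjoint copies would require $t(r-x) > t(r-x)-1$ such vertices, a contradiction. This yields $\ex(n,K_s,tK_r) \ge \cN(K_s, G_0)$.

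For the upper bound, let $G$ be a $tK_r$-free $n$-vertex graph with $\cN(K_s,G) = \ex(n,K_s,tK_r)$; the goal is $G \cong G_0$ for large $n$. The first step (stability) is to refine the $O(n^x)$ bound of Gerbner--Methuku--Vizer \cite{gmv} into the following structural statement: there exists a set $S \subseteq V(G)$ of size $t(r-x)-1$ such that every $v \in S$ has $\deg(v) \ge n - o(n)$ and $G - S$ is ``nearly $K_{x+1}$-free''. The proof in \cite{gmv} picks a maximum packing of $K_r$-copies and distributes $K_s$-copies across it; the dominant $\Theta(n^x)$ term exactly matches the configuration in $G_0$, and tightening the constants there isolates this structure.

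The second step cleans $G$ into $G_0$. For each $v \in S$, if $v$ has a non-neighbor, then replacing $N(v)$ by $V(G) \setminus \{v\}$ preserves $tK_r$-freeness (any newly created $tK_r$ must involve $v$'s new edges, but the stability bound rules out the relevant substructures for large $n$) while strictly increasing $\cN(K_s,\cdot)$, contradicting extremality. Hence every $v \in S$ is universal and $G = K_{|S|} + H$ with $H := G[V \setminus S]$. Finally, $H$ must be $K_{x+1}$-free: a copy $K$ of $K_{x+1}$ in $H$ combines with $r-x-1$ vertices of $S$ to form a $K_r$, leaving $(t-1)(r-x)$ vertices of $S$; by the near-Turán structure of $H$, the remainder $H - V(K)$ still contains $t-1$ vertex-disjoint $K_x$'s, each extending via $r-x$ further vertices of $S$ into a $K_r$, producing a $tK_r$ in $G$, a contradiction. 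Thus $H$ is $K_{x+1}$-free and Zykov's theorem yields $\cN(K_k, H) \le \cN(K_k, T(n-|S|, x))$ for every $k$, whence
\[
\cN(K_s, G) = \sum_{k=0}^{s} \binom{|S|}{s-k} \cN(K_k, H) \le \cN(K_s, G_0),
\]
with equality forcing $H = T(n-|S|, x)$ as desired.

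The main obstacle is the stability step: extracting the \emph{exact} value $|S| = t(r-x)-1$ from the asymptotic bound of \cite{gmv}, rather than merely an $O(1)$-sized set of high-degree vertices. This requires comparing $K_s$-counts of candidate structures $K_m + H'$ as $m$ varies (with $H'$ appropriately constrained) and showing via a convexity/integer-optimization argument that $m = t(r-x)-1$ strictly dominates all competitors for large $n$, crucially exploiting the integrality enforced by $x = \lceil (tr-s)/(t-1)\rceil - 1$.
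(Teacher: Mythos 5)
Your lower bound is correct and matches the paper. The upper bound, however, has a genuine gap at exactly the point you flag as ``the main obstacle'': the stability step is asserted, not proved, and the route you sketch for it does not reflect how the difficulty is actually resolved. It is not a matter of ``tightening the constants'' in \cite{gmv} or of a convexity/integer-optimization comparison of structures $K_m+H'$ over $m$ (that comparison only ranks \emph{constructions}; it says nothing about an arbitrary extremal $G$). The paper instead builds new machinery: Proposition \ref{coverin} produces constant-size sets $A$ and $B$ covering the family $\cH$ of $(x+1)$-sets extendable to $K_s$; then for each $(r-x)$-subset $P\subseteq A\cup B$ one counts the $x$-sets extending $P$ to a $K_r$, and the family $\cP$ of ``rich'' such $P$ (those with at least $n^{x-1/2}$ extensions) is shown to contain no $t$ pairwise disjoint members. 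The number $t(r-x)-1$ then falls out of the \emph{exact hypergraph theorem} of Liu and Wang (Lemma \ref{hgt}) applied to the auxiliary $(r-x)$-uniform hypergraph $\cP$: its uniqueness clause forces, in the equality case, a set $C$ of exactly $t(r-x)-1$ vertices with every $(r-x)$-subset rich, and otherwise one already loses a constant factor against the construction. Your proposal contains no substitute for this input, and without it the exact size $t(r-x)-1$ of the ``core'' is unsupported.

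Your cleaning step is also shakier than you indicate. Making $v\in S$ universal in a $tK_r$-free graph can create a $tK_r$ whenever $G-v$ contains a $(t-1)K_r$ disjoint from a $K_{r-1}$, and ruling this out under the vague hypothesis that $G-S$ is ``nearly $K_{x+1}$-free'' is real work: a bounded number of large cliques outside $S$ could survive the stability step and supply exactly such a configuration. (The paper's only universal-vertex modification, in Proposition \ref{struc}, is licensed by the ``moreover'' clause of Proposition \ref{coverin}, which guarantees $B=\emptyset$; no analogous justification appears in your sketch.) The paper avoids this move in Theorem \ref{exac} entirely: it shows every member of $\cH$ meets $C$, proves the common neighborhood $D$ of $C$ is $K_{x+1}$-free, bounds the $K_s$-count through any vertex $u\notin C\cup D$ via Lemma \ref{trivi} by roughly $\binom{t(r-x)-2}{s-x}\cN(K_{x-1},T(\lfloor (x-1)n/x\rfloor,x))$, and then performs a delete-and-replace comparison (delete $V(G)\setminus(C\cup D)$, pass to $K_{t(r-x)-1}+T(|D|,x)$ via Zykov, re-add the deleted vertices into the Tur\'an part), showing the count strictly increases unless $G$ already has the extremal form. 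Your final counting identity $\cN(K_s,G)=\sum_{k}\binom{|S|}{s-k}\cN(K_k,H)$ and the appeal to Zykov are fine once the structure is established, but as written the two load-bearing steps --- the exact core size and the legitimacy of the swaps --- are both missing.
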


Let $x=\lceil \frac{tr-s}{t-1}\rceil-1$ and let $\cT$  be a $K_{x+1}^p$-free $p$-graph on $n-t(r-x)+1$ vertices with the most copies of $K_x^p$. For two $p$-uniform hypergraphs $\cH$ and $\cH'$ with disjoint vertex sets, we denote by $\cH+\cH'$ the $p$-uniform hypergraph containing $\cH$ and $\cH'$, that contains as additional hyperedges all the $p$-sets intersecting both $\cH$ and $\cH'$. 

\begin{thm}\label{asym}
Let $s\ge r\ge 2$ and $t\ge 1$ be arbitrary integers and let $x=\lceil \frac{tr-s}{t-1}\rceil-1$. Then we have $\ex(n,K_s^p,tK_r^p)=(1+o(1))\cN(K_s^p,K_{t(r-x)-1}^p+\cT)$. In particular, $\ex(n,K_s^p,tK_r^p)=\Theta(n^x)$.
\end{thm}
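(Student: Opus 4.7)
I would verify directly that the construction $\cH^{\ast} := K_{t(r-x)-1}^p + \cT$ is $tK_r^p$-free and then count its $K_s^p$-copies. Any copy of $K_r^p$ in $\cH^{\ast}$ has at most $x$ vertices in $\cT$ (since $\cT$ is $K_{x+1}^p$-free), hence at least $r-x$ vertices in the clique part; as this part has only $t(r-x)-1$ vertices, no $t$ vertex-disjoint copies of $K_r^p$ can coexist. Partitioning each $K_s^p$ by $i := |V(K_s^p) \cap V(\cT)| \in \{0, 1, \dots, x\}$, where the $i$ vertices in $\cT$ must form a $K_i^p$ and the remaining $s-i$ are chosen freely from the clique (the crossing $p$-sets are automatically edges by the definition of ``$+$''), yields
\[
\cN(K_s^p, \cH^{\ast}) = \sum_{i=0}^{x} \binom{t(r-x)-1}{s-i}\, \cN(K_i^p, \cT),
\]
whose dominant term at $i = x$ equals $\binom{t(r-x)-1}{s-x}\, \ex(n - t(r-x)+1, K_x^p, K_{x+1}^p) = \Theta(n^x)$; the bound $\ex(n, K_x^p, K_{x+1}^p) = \Omega(n^x)$ follows from the complete balanced $x$-partite $p$-graph (whose $K_x^p$-copies are the transversals).

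\textbf{Upper bound via a structural claim.} Let $\cH$ be an extremal $n$-vertex $tK_r^p$-free hypergraph. I would reduce to the following: there exists $B \subseteq V(\cH)$ with $|B| = t(r-x)-1$ such that $\cH - B$ is $K_{x+1}^p$-free, up to a defect contributing at most $o(n^x)$ copies of $K_s^p$. Given this, partitioning each $K_s^p$ in $\cH$ by $j := |V(K_s^p) \cap B|$ gives
\[
\cN(K_s^p, \cH) \le \sum_{j=0}^{s} \binom{t(r-x)-1}{j}\, \cN(K_{s-j}^p, \cH - B) + o(n^x);
\]
terms with $s-j \ge x+1$ vanish (or are absorbed into the $o(n^x)$ defect), terms with $s-j \le x-1$ are $O(n^{x-1})$, and the term $j = s-x$ contributes at most $\binom{t(r-x)-1}{s-x}\, \ex(n, K_x^p, K_{x+1}^p)$, exactly matching the leading term of $\cN(K_s^p, \cH^{\ast})$.

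\textbf{Main obstacle.} Proving the structural claim is the crux. My approach is to first pick a maximum vertex-disjoint packing of $K_r^p$ in $\cH$; since $\cH$ is $tK_r^p$-free, the packing has at most $t-1$ parts, yielding a set $A$ with $|A| \le (t-1)r$ such that $\cH - A$ is $K_r^p$-free. For each $K_{x+1}^p$ surviving in $\cH - A$, the plan is that $r-x-1$ of its vertices together with suitable vertices of $A$ form a replacement $K_r^p$ (through the ``+''-cross-edges if $B$ were the correct core), allowing a rearrangement that contradicts either the maximality of the packing or $tK_r^p$-freeness. Formalising this exchange, and promoting $A$ to a set $B$ of precisely size $t(r-x)-1 = (t-1)(r-x) + (r-x-1)$ --- namely the ``$(r-x)$-part'' of each packed $K_r^p$ plus $r-x-1$ extra ``core'' vertices of highest $K_s^p$-degree --- is the main technical step, and I expect it will require a Zykov-type symmetrisation argument in the spirit of the graph-case Theorem~\ref{exac} and of Lemma~\ref{hgt}.
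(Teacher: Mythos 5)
Your lower bound is correct (and your count $\cN(K_s^p,\cH^{\ast})=\sum_i\binom{t(r-x)-1}{s-i}\cN(K_i^p,\cT)$ agrees with the paper's leading term $\binom{t(r-x)-1}{s-x}\cN(K_x^p,\cT)$), and your counting reduction from the structural claim to the upper bound is also sound. But the proof stands or falls with the structural claim itself, and your sketch for it does not work. First, a maximum $K_r^p$-packing only yields a set $A$ of at most $(t-1)r$ vertices with $\cH-A$ being $K_r^p$-free; this says nothing about copies of $K_{x+1}^p$ in $\cH-A$ that extend to a $K_s^p$ \emph{through} $A$, and in general no single set of $t(r-x)-1$ vertices covers all such $(x+1)$-sets. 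The covering statement that is actually available (Proposition \ref{coverin}, applied to the family of $(x+1)$-sets lying in some $K_s^p$) has the weaker form ``one vertex of $A$ or two vertices of $B$'' with $A\cup B$ merely of constant size; and even invoking it requires first bounding the matching number of that family, which the paper does via an auxiliary graph and a greedy extension argument resting on the numerical inequality $t(r-x-1)\le s-x-1$ --- a step with no counterpart in your outline. Second, a constant-size cover only gives the order of magnitude: you would obtain an upper bound of roughly $\binom{|A\cup B|}{s-x}\cN(K_x^p,\cT)$, not the asymptotic constant $\binom{t(r-x)-1}{s-x}$. The paper never locates a $(t(r-x)-1)$-vertex core in the asymptotic argument; instead it defines the family $\cP$ of \emph{rich} $(r-x)$-subsets of $A\cup B$ (those with at least $n^{x-1/2}$ extensions to $K_r^p$ outside $A\cup B$), shows $\cP$ contains no $t$ pairwise disjoint members, and applies the Erd\H os-matching-type theorem of Liu and Wang (Lemma \ref{hgt}, in uniformity $r-x$ with clique size $s-x$; the hypothesis $s-x>t(r-x-1)$ is exactly what the definition of $x$ guarantees) to bound the number of copies of $K_{s-x}^{r-x}$ in $\cP$ by $\binom{t(r-x)-1}{s-x}$. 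Some ingredient of this kind is unavoidable for extracting the constant, and it is absent from your plan; indeed your structural claim is best viewed as a \emph{consequence} of this machinery (via the uniqueness part of Lemma \ref{hgt} in the equality case), so assuming it up front is close to circular.

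The repair tools you name would not rescue the sketch either. Zykov-type symmetrisation is not available for $p$-uniform hypergraphs with $p>2$ --- the paper uses Zykov only in the $p=2$ exact part, and this is precisely why the theorem is phrased with the abstract extremal $p$-graph $\cT$ rather than an explicit construction (even $\ex(n,K_{x+1}^p)$ is unknown). Likewise, selecting ``core vertices of highest $K_s^p$-degree'' has no exchange lemma behind it. Once the $\binom{t(r-x)-1}{s-x}$ bound on rich $(s-x)$-sets is in hand, the finish is essentially your displayed estimate: each rich $K_{s-x}^{r-x}$ extends only through its common link, which is $K_{x+1}^p$-free (any $K_{x+1}^p$ there would extend to a $K_s^p$ avoiding $A\cup B$) and hence admits at most $\cN(K_x^p,\cT)$ extensions, while all remaining copies of $K_s^p$ number $O(n^{x-1/2})$.
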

Given a $p$-graph $G$ and a subset $U\subset V(G)$, a \textit{partial $(m,U)$-blowup} of $G$ is obtained by replacing each vertex $u\in U$ with $m$ vertices $u_1,\dots,u_m$, and for each hyperedge $H$ of $G$, if it shares vertices $u^1,\dots, u^q$ with $U$, then we replace $H$ with $m^q$ hyperedges that contain for each $i\le q$ a vertex $u^{i}_j$ for some $j\le m$ and each vertex in $H\setminus U$. Let $U(m)$ denote the $|U|^m$ new vertices.

Let us consider a largest set $U\subset V(G)$ such that no partial $(m,U)$-blowup of $G$ contains $tK_r^p$, and let $b(G)=b(G,t,r)$ denote the order of $U$. Note that it is enough to check $m=t$ to determine whether the partial blowup contains $tK_r^p$. 

Observe that if $|U'|>b(H)$, then every $tK_r^p$ in the partial blowup contains at most $t$ vertex-disjoint cliques from $U'(m)$. Also, every copy of $H[U']$ in $U'(m)$ has $t$ vertex-disjoint cliques that are extended to $K_r^p$ using $H\setminus U'$ vertex-disjointly. That implies that in $H$, there are $t$ vertex-disjoint cliques in $H\setminus U'$ that extend to $K_r^p$ using some subgraph of $H[U']$.

\begin{thm}\label{magni}
    For every $p$-graph $H$, we have $\ex(n,H,tK_r^p)=\Theta(n^{b(H)})$. 
\end{thm}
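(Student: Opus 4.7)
The plan is to prove matching lower and upper bounds.

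For the \textbf{lower bound}, I would choose $U\subseteq V(H)$ with $|U|=b(H)$ witnessing the definition: no partial $(m,U)$-blowup of $H$ contains $tK_r^p$ (by the preceding note it suffices to check $m=t$). Let $G$ be the partial $(M,U)$-blowup of $H$ for $M=\lfloor(n-|V(H)\setminus U|)/|U|\rfloor$, padded to $n$ vertices with isolated vertices. Then $G$ is $tK_r^p$-free by construction, and each copy of $H$ in $G$ is obtained by selecting one representative from each of the $|U|$ blowup classes together with the fixed vertex for each $w\in V(H)\setminus U$; this yields $M^{|U|}=\Theta(n^{b(H)})$ copies, so $\ex(n,H,tK_r^p)=\Omega(n^{b(H)})$.

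For the \textbf{upper bound}, I would argue by contradiction. Suppose $G$ is $tK_r^p$-free on $n$ vertices while $\cN(H,G)$ exceeds every constant multiple of $n^{b(H)}$. Fix $U\subseteq V(H)$ with $|U|=b(H)+1$; by the definition of $b(H)$, the partial $(t,U)$-blowup of $H$ contains $tK_r^p$, so finding a copy of this blowup in $G$ yields $tK_r^p\subseteq G$, the desired contradiction. Concretely, such a copy consists of an injection $f:V(H)\setminus U\to V(G)$ together with pairwise disjoint $t$-sets $X_u\subseteq V(G)\setminus f(V(H)\setminus U)$ indexed by $u\in U$ such that every selector $\phi\in\prod_{u\in U}X_u$ extends $f$ to an embedding of $H$ into $G$.

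To produce this structure, I would group the labeled embeddings of $H$ into $G$ by their restriction to $V(H)\setminus U$. By pigeonhole some restriction $f$ has many extensions, and the set $\mathrm{Ext}(f)\subseteq V(G)^U$ of those extensions, regarded as a $(b(H)+1)$-uniform $(b(H)+1)$-partite hypergraph on $V(G)$, should be dense enough to contain a combinatorial $t$-box via a K\H{o}v\'ari--S\'os--Tur\'an-type argument; a routine cleaning step then makes the box coordinates pairwise disjoint and disjoint from $f(V(H)\setminus U)$, producing the required embedding of the partial $(t,U)$-blowup of $H$ in $G$.

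The main obstacle is the density required for the K\H{o}v\'ari--S\'os--Tur\'an step: naive averaging only gives $|\mathrm{Ext}(f)|\ge\Omega(n^{b(H)+1-|V(H)\setminus U|})$, which may be too sparse in $V(G)^U\cong[n]^{b(H)+1}$ when $|V(H)\setminus U|$ is large. I expect the fix to exploit that $G$, being $tK_r^p$-free, has a constant-size vertex cover hitting every $K_r^p$-copy (obtained from a maximum matching among $K_r^p$-copies), allowing one to restrict attention to copies of $H$ whose image meets this cover in a controlled way and then to iterate or induct on $|V(H)|$; alternatively, one can invoke a supersaturation statement tailored to the box structure in place of the vanilla K\H{o}v\'ari--S\'os--Tur\'an bound.
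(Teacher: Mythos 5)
Your lower bound is correct and is the same as the paper's. The upper bound, however, has a genuine gap, and it is precisely the obstacle you flag yourself: it is fatal, not routine. Under your contradiction hypothesis you only have $\omega(n^{b(H)})$ copies of $H$, while there are up to $n^{|V(H)|-b(H)-1}$ choices of the restriction $f$, so pigeonhole only guarantees a fiber $\mathrm{Ext}(f)$ of size $\omega(n^{2b(H)+1-|V(H)|})$ --- which is vacuous (below a constant) as soon as $|V(H)|\ge 2b(H)+2$. Any K\H{o}v\'ari--S\'os--Tur\'an or Erd\H{o}s box-theorem mechanism for extracting a $t$-box from a $(b(H)+1)$-partite $(b(H)+1)$-uniform hypergraph on parts of size $n$ requires density of order $n^{b(H)+1-\epsilon}$ in that fiber, a polynomial factor $n^{|V(H)|-b(H)-1-o(1)}$ beyond what the hypothesis can supply; no supersaturation statement closes a polynomial gap of this kind, because the mechanism you invoke forces the blowup only well above the true extremal count $\Theta(n^{b(H)})$, whereas the contradiction must already be reached at $\omega(n^{b(H)})$. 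Your fallback sketch (a constant-size set hitting all copies of $K_r^p$, then iterating or inducting on $|V(H)|$) points in the right general direction --- a covering argument --- but it is not a proof: meeting the hitting set pins only $O(1)$ vertices of each copy of $H$, and the inductive statement you would need, about rooted copies of vertex-deleted subhypergraphs with a controlled blowup parameter, is never formulated and is not obviously true.

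The paper avoids density entirely and its key idea is absent from your proposal. It takes $\cH$ to be the family of $(b(H)+1)$-sets of vertices of $G$ contained in a copy of $H$, and first shows $\cH$ has bounded matching number: if $H_1,\dots,H_c$ are disjoint members with extending copies $Q_1,\dots,Q_c$, then by the definition of $b(H)$ each $Q_i$ contains $t$ vertex-disjoint cliques $A_i^1,\dots,A_i^t$, each extendable to a $K_r^p$ through $H_i$; after passing to an independent set of size $F(r,t)+1$ in an auxiliary graph (so that the $Q_i$ avoid the other $H_j$), the bound on the rainbow matching function $F(r,t)$ of Aharoni--Berger and Alon yields $t$ pairwise disjoint cliques $A_1^{j_1},\dots,A_t^{j_t}$, hence a $tK_r^p$, a contradiction. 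With the matching number bounded, Proposition \ref{coverin} produces sets $A$ and $B$ of size $O(1)$ such that every member of $\cH$ meets $A$ or meets $B$ twice; consequently every copy of $H$ has at least $|V(H)|-b(H)$ vertices inside $A\cup B$, and the bound $O(n^{b(H)})$ follows by direct enumeration. Note also that the paper's target structure is much weaker than yours: it never embeds a partial blowup of $H$ in $G$, only $t$ disjoint $(b(H)+1)$-sets whose clique extensions can be made disjoint via rainbow matchings --- this is exactly what makes a counting-free argument possible.
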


We also prove a simple statement on the structure of the extremal graphs in the non-degenerate case.

\begin{proposition}\label{struc}
Assume that $K_r$ is not contained in $H$ and $t>1$. If $n$ is sufficiently large, then there exists an $n$-vertex $tK_r$-free graph containing $\ex(n,H,tK_r)$ copies of $H$ that contains $t-1$ vertices of degree $n-1$.
\end{proposition}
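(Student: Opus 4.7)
The plan is to modify any extremal graph $G$ into one with $t-1$ dominating vertices, without decreasing the number of copies of $H$ or creating a $tK_r$. I will first reduce to the edge-maximal case, then introduce a concrete candidate construction with $t-1$ dominating vertices, and finally reduce everything to the existence of a small $K_r$-transversal.

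First, since adding an edge never decreases $\cN(H,\cdot)$, we may iteratively add any non-edge whose inclusion keeps $G$ $tK_r$-free, and so replace $G$ by an \emph{edge-maximal} extremal graph. Edge-maximality means that for every non-edge $uv$ of $G$, the graph $G + uv$ contains a $tK_r$; equivalently, $G \setminus \{u,v\}$ contains $t - 1$ vertex-disjoint copies of $K_r$ together with a $K_{r-2}$ inside $N(u)\cap N(v)$ disjoint from them. In particular, $G$ already contains $t-1$ vertex-disjoint copies of $K_r$, and the subgraph induced on the remaining vertices is $K_r$-free.

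As a candidate, consider $G^\star := K_{t-1} + H(m)$ with $m = \lfloor (n-t+1)/|V(H)|\rfloor$, where $H(m)$ is the balanced $m$-blowup of $H$. Since $K_r \not\subset H$, every clique in $H(m)$ projects to a clique in $H$; hence $H(m)$ is $K_r$-free, $G^\star$ is $tK_r$-free, and $G^\star$ has exactly $t-1$ vertices of degree $n-1$. By Theorem~\ref{magni} applied with $b(H)=|V(H)|$ (any blowup of a $K_r$-free graph is $K_r$-free, hence $tK_r$-free), $\ex(n, H, tK_r) = \Theta(n^{|V(H)|})$, matching the order of $\cN(H, G^\star)$.

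It suffices to show that every edge-maximal extremal $G$ admits a set $D \subseteq V(G)$ of at most $t - 1$ vertices such that $G \setminus D$ is $K_r$-free. Granted this, $G \subseteq K_{t-1} + (G \setminus D)$ after adjoining the missing edges inside $D$ and from $D$ to the rest (which keeps the graph $tK_r$-free), yielding $\cN(H, G) \leq \cN(H, K_{t-1} + (G \setminus D))$ and an extremal graph of the required form with $t-1$ dominating vertices.

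The main obstacle is producing such a $K_r$-transversal of size at most $t - 1$: in general, the $K_r$-covering number of a $tK_r$-free graph can exceed $t - 1$ (for instance $K_{tr-1}$ has covering number $(t-1)r$), but these counterexamples live only on $O(1)$ vertices and contribute $O(1)$ copies of $H$, negligible against the $\Theta(n^{|V(H)|})$ lower bound. The technical crux of the proof is a stability step: using the edge-maximality certificates together with Theorem~\ref{magni}, show that an extremal $G$ concentrates all but $O(1)$ of its $H$-copies on a substructure essentially of the form $K_{t-1} + (\text{blowup of } H)$, so the $K_r$s in $G$ can be covered by a set of size at most $t-1$, completing the argument.
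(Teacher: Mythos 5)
Your reduction is sound as far as it goes: if every extremal $G$ admits a set $D$ of at most $t-1$ vertices meeting all copies of $K_r$, then $K_{t-1}+(G\setminus D)$ is $tK_r$-free, has at least $\cN(H,G)$ copies of $H$, and has the required $t-1$ dominating vertices. But the existence of such a transversal is the entire content of the proposition, and your proposal does not prove it --- the final paragraph explicitly labels it ``the technical crux'' and gestures at an unspecified ``stability step'' showing that $G$ concentrates its $H$-copies on a structure of the form $K_{t-1}+(\text{blowup of }H)$. That stability statement is nowhere established, and it is not a routine consequence of edge-maximality or of the order of magnitude $\ex(n,H,tK_r)=\Theta(n^{|V(H)|})$: for instance, the disjoint union of $K_{tr-1}$ with a large blowup of $H$ is $tK_r$-free, carries $\Theta(n^{|V(H)|})$ copies of $H$, yet its $K_r$-covering number is $(t-1)r>t-1$, so ruling such configurations out for \emph{extremal} graphs genuinely requires an argument, not the remark that the obstruction lives on $O(1)$ vertices. (A smaller fixable point: when $|D|<t-1$ your graph $K_{t-1}+(G\setminus D)$ has more than $n$ vertices; one must pad $D$ to size exactly $t-1$ first.)

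For comparison, the paper closes exactly this gap with a concrete two-case argument. It applies Proposition~\ref{coverin} to the family of $r$-cliques of $G$, taking $A$ of smallest size: if $|A|=t-1$, the ``moreover'' clause forces $B=\emptyset$, so $A$ is precisely the transversal you need, and joining $A$ to all other vertices stays $tK_r$-free. If $|A|<t-1$, it derives a contradiction with extremality: deleting the edges inside $B$ destroys only $O(n^{|V(H)|-2})$ copies of $H$ and leaves a $(t-1)K_r$-free graph $G'$, extremality of $G$ forces any vertex made dominating in $G'$ to create only $O(n^{|V(H)|-2})$ new copies of $H$, and a counting argument around a maximal clique $X$ of $G'$ (every copy of $H'=H-u$ avoiding $X$ has a vertex non-adjacent to some $x\in X$, so the number of such copies is $O(n^{|V(H)|-2})$) then bounds $\cN(H,G)$ by $O(n^{|V(H)|-1})$, contradicting the $\Theta(n^{|V(H)|})$ blowup lower bound. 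Some quantitative argument of this kind is unavoidable; without it your proposal is a correct reformulation of the problem rather than a proof.
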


By a result of Alon and Shikhelman \cite{AS} the assumptions of the above proposition imply that $\ex(n,H,tK_r)=\Theta(n^{|V(H)|})$. Note that this is the trivial direction: any blow-up of $H$ is $K_r$-free and contains $\Theta(n^{|V(H)|})$ copies of $H$.

Let $H_1,\dots,H_k$ be the graphs obtained from $H$ by deleting at most $t-1$ vertices. Let $\alpha_i$ denote the number of ways $H_i$ can be extended to $H$ using $t-1$ additional vertices. More precisely, let $H'_i$ denote the graph we obtain by adding $t-1$ vertices $v_1,\dots,v_t$ to $H_i$ and connecting them to each other and to each vertex of $H_i$. Let $\alpha_i$ denote the number of copies of $H$ in $H'_i$ that contain all the vertices and edges in $V(H'_i)\setminus \{v_1,\dots,v_{t-1}\}$, i.e., that contain the original $H_i$.


Let $x(G)=\sum_{i=1}^k\alpha_i\cN(H_i,G)$. If the assumptions of the above proposition hold, then $\ex(n,H,tK_r)=\max\{x(G): \text{ $G$ is a $K_r$-free $(n-t+1)$-vertex graph}\}$. In the case $H$ is complete multipartite, each $H_i$ is also complete multipartite. In that case, a theorem of Schelp and Thomason \cite{scth} shows that $x(G)$ is maximized by a complete multipartite graph. Therefore, if $H$ is complete $p$-partite with $p<r$, and $n$ is large enough, then $\ex(n,H,tK_r)=\cN(H,G)$ for a complete multipartite graph with at least $t-1$ parts of order exactly 1 (it is not hard to see that $G$ has exactly $t-1$ parts of order 1).

Another case when we can obtain an exact result is when the same $n$-vertex $F$-free graph maximizes $\cN(H_i,G)$ for every $i$. For example, $\ex(n,H,2K_3)=\cN(H,K_1+T(n-1,2))$ for any bipartite $H$ with a matching covering all the vertices. Indeed, each $H_i$ is a bipartite graph with a matching covering all but at most one of the vertices. Among triangle-free graphs, the Tur\'an graph contains the most copies of such graphs by a theorem of Gy\H ori, Pach, and Simonovits \cite{gypl}.

\section{Proofs}

Our main tool is the following simple statement about forbidden matchings  in hypergraphs.

\begin{proposition}\label{coverin}
Let $r\ge 2$ and $\cH$ be an $r$-uniform family of sets that does not contain $t$ pairwise disjoint sets. Then there is a set $A$ of at most $t-1$ vertices and a set $B$ of at most $r(2t-2)$ vertices such that each member of $\cH$ contains at least one element of $A$ or at least two elements of $B$. Moreover, if for every such pair $A,B$ we have $|A|=t-1$, then there is a pair where $B=\emptyset$.
\end{proposition}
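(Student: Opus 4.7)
I would proceed by a greedy construction from a maximum matching. Let $M = \{e_1, \dots, e_s\}$ be a maximum matching in $\cH$, so $s \le t-1$ and by maximality every $e \in \cH$ meets $V(M) := e_1 \cup \dots \cup e_s$. Call $e \in \cH$ \emph{simple} if $|e \cap V(M)| = 1$ (then $e \cap V(M) \subseteq e_i$ for a unique $i$; denote this family by $\cS_i$) and \emph{complex} otherwise.

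The key observation is that each $\cS_i$ is intersecting: if $f, f' \in \cS_i$ were disjoint, their $V(M)$-intersections would be two distinct vertices of $e_i$, so $(M \setminus \{e_i\}) \cup \{f, f'\}$ is a matching of size $s+1$, violating either maximality of $M$ or the no-$t$-disjoint-sets hypothesis. Granting this, for each $i$ with $\cS_i \ne \emptyset$ pick $f_i \in \cS_i$, set $v_i := f_i \cap V(M) \in e_i$, and take
\[
A := \{v_i : \cS_i \ne \emptyset\}, \qquad B := (V(M) \setminus A) \cup \bigcup_{i:\cS_i \ne \emptyset} \bigl(f_i \setminus V(M)\bigr).
\]
Then $|A| \le s \le t-1$ and $|B| \le (rs-|A|) + |A|(r-1) \le 2s(r-1) \le 2(t-1)(r-1) \le r(2t-2)$. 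For the covering: a complex $e$ disjoint from $A$ has $e \cap V(M) \subseteq B$ with $|e \cap V(M)| \ge 2$; a simple $e \in \cS_i$ disjoint from $A$ has $e \cap V(M) = \{u\}$ with $u \ne v_i$, so $u \in B$, and since $\cS_i$ is intersecting while $e$ and $f_i$ meet $V(M)$ in different vertices of $e_i$, their common vertex must lie in $f_i \setminus V(M) \subseteq B$, yielding a second $B$-vertex.

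For the moreover, under its hypothesis the construction above must give $|A| = t - 1$, which forces both $s = t - 1$ and $\cS_i \ne \emptyset$ for every $i$; otherwise dropping a superfluous $v_i$ (with $\cS_i = \emptyset$) from $A$ already produces a valid pair with $|A| \le t - 2$. It then suffices to exhibit a vertex cover of $\cH$ of size $t-1$, for then $(A^*, \emptyset)$ is a valid pair with $B = \emptyset$. I would argue this by contraposition: assuming $\tau(\cH) \ge t$, no transversal $\{c_1, \dots, c_{t-1}\}$ with $c_i \in e_i$ covers $\cH$, and by picking a failed transversal together with a witnessing uncovered edge, one can exploit the intersecting structure of some $\cS_i$ (in particular a common vertex outside $V(M)$, or a second vertex of $e_i$ witnessing the failure of $v_i$) to swap one of the $v_i$ out of $A$ and compensate by enlarging $B$, producing a valid pair with $|A| \le t-2$ and contradicting the hypothesis.

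The principal obstacle is this last step. The greedy construction and the covering verification are routine once the intersecting property of each $\cS_i$ is in hand, but converting the structural statement ``no vertex cover of size $t-1$ exists'' into an explicit reduction of $|A|$ in some valid pair requires careful case analysis of how the intersecting families $\cS_i$—which need not share a common vertex—interact with the matching and with the complex edges; this is the only genuinely non-elementary part of the proposition.
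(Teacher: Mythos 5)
Your construction of the pair $(A,B)$ is correct and complete, and it reorganizes the same underlying idea as the paper: where you take a maximum matching $M=\{e_1,\dots,e_s\}$ all at once and exploit that each link family $\cS_i$ is intersecting, the paper runs an induction on $t$ together with a greedy process that repeatedly picks a member $H_i'$ sharing exactly one vertex $u_i$ with the union built so far; your pairs $(e_i,f_i)$ meeting in $v_i$ play exactly the role of the paper's pairs $(H_i,H_i')$ meeting in $u_i$, and your version is cleaner and even gives the marginally better bound $|B|\le 2(t-1)(r-1)$. But note what the paper's more laborious process buys: its partner sets $H_1',\dots,H_{t-1}'$ are pairwise disjoint and each meets everything previously chosen only in $u_i$, so a member avoiding all of $u_1,\dots,u_{t-1}$ must intersect \emph{both} $H_i$ and $H_i'$ for some $i$ (otherwise picking, for each index, whichever of $H_i,H_i'$ it avoids yields $t$ pairwise disjoint sets). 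Your $f_i$'s need not be pairwise disjoint outside $V(M)$, so you do not get this dichotomy --- and that is precisely the structure the paper leans on for the ``moreover'' part.

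The ``moreover'' part is where your proposal has a genuine gap, as you concede yourself. The swap you sketch --- remove some $v_i$ from $A$ and compensate by enlarging $B$ --- fails against the most obvious obstruction: members $K$ with $K\cap\bigl(V(M)\cup\bigcup_j f_j\bigr)=\{v_i\}$ and all remaining vertices fresh; if there are several such members forming a sunflower with core $\{v_i\}$, then no bounded $B'$ can 2-cover them all, so no pair of the form $(A\setminus\{v_i\},B')$ is valid, and your sketch gives no rule for choosing which $v_i$ admits a successful swap. The paper resolves this not by post-processing a static pair but by interleaving the construction with the hypothesis: a member meeting the current configuration in exactly one non-$u_j$ vertex is exactly what drives its greedy process to the next step, and every way the process can stop early produces a valid pair with $|A|\le t-2$, contradicting the hypothesis that all valid pairs have $|A|=t-1$. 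So under that hypothesis the process runs to completion, and then, if some member $H$ still avoids every $u_i$, it meets both $H_{t-1}$ and $H_{t-1}'$ (after relabeling); the paper takes $A'=\{u_1,\dots,u_{t-2}\}$ and $B'=H_{t-1}\cup H_{t-1}'\cup H$ and argues that any member avoiding $A'$ must intersect all three sets of $B'$, hence (since no vertex lies in all three) contains two vertices of $B'$ --- again a valid pair with $|A'|=t-2$, a contradiction. Therefore every member contains some $u_i$ and $(\{u_1,\dots,u_{t-1}\},\emptyset)$ is the desired pair. The missing idea in your plan is thus not a vertex-cover computation by contraposition, but the dynamic dichotomy that either the augmentation continues or a valid pair with smaller $A$ is exhibited on the spot; to salvage your matching-based route you would need to rerun such an augmentation inside your framework rather than repair the pair $(A,B)$ after the fact.
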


\begin{proof}
We use induction on $t$, the base cases $t=1$ and $t=2$ are trivial. First we prove the existence of $A$ and $B$. If $\cH$ does not contain $t-1$ pairwise disjoint sets, we are done by induction, hence we can assume that $H_1,\dots,H_{t-1}$ are pairwise disjoint members of $\cH$. Then every $H\in \cH$ contains a vertex of $U=H_1\cup\dots \cup H_{t-1}$. We will pick sets $H_i'\in \cH$ that contain only one element $v_i\in H_i$ from $U$, one by one in the following way. If every $H\in \cH$ contains at least 2 vertices of $U$, we are done. Otherwise, there is $H_1'\in \cH$ that shares exactly one element with $U$, say $u_1\in H_1$. In general, after picking $H_i'$, if every member of $\cH$ shares at least 2 elements with $U\cup H_1'\cup\dots\cup H_i'$, we are done, otherwise there is a set $H_{i+1}'$ that shares only one vertex $u_{i+1}$ with $U\cup H'_1\cup \dots \cup H'_i$. Note that $u_{i+1}\in U$. If $u_{i+1}\in H_j$ for some $j\le i$, then $H_1,\dots,H_{j-1},H'_j,H_{j+1},H_{t-1},H'_{i+1}$ are $t$ pairwise disjoint sets unless $u_{i+1}=u_j$. If each member of $\cH$ contains one of $u_1,\dots,u_{i}$, we are done. Otherwise, $u_{i+1}\in H_j$ with $j>i$, we can assume without loss of generality that $u_{i+1}\in H_{i+1}$. We continue this way till we find $u_{t-1}$. Then the only intersections among $H_1,\dots, H_{t-1},H_1',\dots, H_{t-1}'$ are the pairwise intersections $H_i\cap H_i'=u_i$. Every member of $\cH$ not containing any $u_i$ has to intersect both $H_i$ and $H_i'$ for some $i$, thus contains at least two vertices of $U'=H_1\cup\dots\cup H_{t-1}\cup H_1'\cup\dots\cup H_{t-1}'$. 

For the moreover part, assume indirectly that we have defined $u_{t-1}$ and that there is a member $H\in \cH$ that does not contain any $u_i$. By the above, $H$ intersects, say, $H_{t-1}$ and $H'_{t-1}$. Let $A'=\{u_1,\dots,u_{t-2}\}$. Every member of $\cH$ that does not intersect $A'$ has to intersect each of $H_{t-1}$, $H'_{t-1}$ and $H$, thus contains at least two vertices of $H_{t-1}\cup H'_{t-1}\cup H$, hence we could pick $A'$ instead of $A$, a contradiction.
\end{proof}

We will apply the above proposition to families of vertex sets of copies of $K_r^p$ in $tK_r^p$-free hypergraphs $\cH$.
Observe that if $B$ is empty, then we can connect the at most $t-1$ vertices of $A$ to every other vertex of $\cH$ without creating a $tK_r^p$. Now we can prove Proposition \ref{struc} that we restate here for convenience.


\begin{proposition*}
Assume that $K_r$ is not contained in $H$ and $t>1$. If $n$ is sufficiently large, then there exists an $n$-vertex $tK_r$-free graph containing $\ex(n,H,tK_r)$ copies of $H$ that contains $t-1$ vertices of degree $n-1$.
\end{proposition*}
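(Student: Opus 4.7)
The plan is to apply Proposition~\ref{coverin} to the $r$-uniform family of vertex sets of $K_r$-copies in an extremal graph $G$ and to leverage the observation preceding the restatement: when $B=\emptyset$, the at most $t-1$ vertices of $A$ may be joined to every other vertex of $G$ without creating a $tK_r$. I begin with an $n$-vertex $tK_r$-free graph $G$ satisfying $\cN(H,G)=\ex(n,H,tK_r)$ and apply Proposition~\ref{coverin} to extract a pair $(A,B)$ with $|A|\le t-1$ and $|B|\le r(2t-2)$ such that every $K_r$ in $G$ either hits $A$ or contains at least two vertices of $B$. The aim is to reduce to the case $B=\emptyset$.

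Suppose first that some valid pair has $B=\emptyset$. Then I pad $A$ with arbitrary extra vertices to size exactly $t-1$ and form $G'$ from $G$ by adding every missing edge within $A$ and between $A$ and $V(G)\setminus A$. Since every $K_r$ of $G$ hits $A$, the graph $G'-A=G-A$ is $K_r$-free; hence every $K_r$ of $G'$ uses a vertex of $A$, and $|A|=t-1$ forces at most $t-1$ pairwise disjoint $K_r$'s in $G'$, so $G'$ is $tK_r$-free. Since $G'\supseteq G$, we have $\cN(H,G')\ge\cN(H,G)$, so $G'$ is extremal and contains $t-1$ vertices of degree $n-1$, as required.

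It remains to ensure a pair with $B=\emptyset$ can always be arranged. By the moreover clause of Proposition~\ref{coverin}, either this already holds, or some valid pair has $|A|\le t-2$, which is the residual case. I would rule out the residual case by producing a strictly better $n$-vertex $tK_r$-free graph, contradicting extremality of $G$. The candidate is $G^*=K_{t-1}+T$, where $T$ is a $K_r$-free graph on $n-t+1$ vertices built from $G-A-B$ (already $K_r$-free on $n-|A|-|B|$ vertices) by duplicating a suitable vertex, an operation which preserves $K_r$-freeness, to adjust the vertex count; the graph $G^*$ is $tK_r$-free since every $K_r$ in $G^*$ uses one of the $t-1$ universal vertices of $K_{t-1}$.

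The main obstacle is verifying $\cN(H,G^*)\ge\cN(H,G)$ in the residual case. Since $|A\cup B|$ is bounded by the constant $(t-2)+r(2t-2)$, the copies of $H$ in $G$ meeting $A\cup B$ contribute only $O(n^{|V(H)|-1})$ to $\cN(H,G)$. Conversely, in $G^*$ each of the $t-1$ universal vertices together with copies of $H-v$ in $T$ (for $v\in V(H)$) yields $\Omega(n^{|V(H)|-1})$ additional copies of $H$, provided $T$ is rich enough; this richness follows from the Alon--Shikhelman lower bound $\ex(n,H,K_r)=\Theta(n^{|V(H)|})$ used in the paper. Carefully balancing these leading-order terms yields $\cN(H,G^*)>\cN(H,G)$ for sufficiently large $n$, giving the contradiction that closes the argument and thereby guaranteeing that the easy case is always accessible.
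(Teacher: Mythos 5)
Your easy case is fine and coincides with the paper's: when some valid pair has $B=\emptyset$, padding $A$ to $t-1$ vertices and joining them to everything gives a $tK_r$-free supergraph of $G$, hence an extremal graph with $t-1$ universal vertices. The genuine gap is in your residual case $|A|\le t-2$. You want $\cN(H,G^*)>\cN(H,G)$ for $G^*=K_{t-1}+T$, but the two quantities you propose to "balance" live at the same order of magnitude with uncontrolled constants. The loss, $\cN(H,G)-\cN(H,G-A-B)$, i.e.\ the copies of $H$ meeting $A\cup B$, is $O(n^{|V(H)|-1})$ but can genuinely be $\Theta(n^{|V(H)|-1})$ with a large constant: $A\cup B$ may have up to $(t-2)+r(2t-2)$ vertices, and the covering property of Proposition~\ref{coverin} constrains only the $K_r$'s, not degrees, so each vertex of $B$ may be adjacent to almost everything and lie in $\Omega(n^{|V(H)|-1})$ copies of $H$. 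Your gain, the copies of $H$ through the $t-1$ universal vertices of $G^*$, is likewise $\Theta(n^{|V(H)|-1})$, with a constant governed by $\cN(H-u,T)$, which you can only bound below through the blow-up lower bound (and note this richness needs extremality of $G$ together with the blow-up bound, not the Alon--Shikhelman bound alone). Nothing in your argument compares these two constants, and a priori the loss from up to $|A\cup B|$ near-universal vertices dominates the gain from exactly $t-1$ universal ones; "carefully balancing the leading-order terms" is precisely the step that is missing. Extremality of $G$ enters your argument only through $\cN(H,G^*)\le\cN(H,G)$, which points in the wrong direction.

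The paper closes this case by a different, quantitative use of extremality that sidesteps any constant-versus-constant comparison. It deletes the edges inside $B$, losing only $O(n^{|V(H)|-2})$ copies since $|B|=O(1)$; because $|A|\le t-2$, the resulting $G'$ is $(t-1)K_r$-free, so joining any single vertex to all others keeps the graph $tK_r$-free, and extremality then caps the number of new $H$-copies created by each such operation at $O(n^{|V(H)|-2})$. Feeding this marginal bound into the maximal-clique argument (every copy of $H'=H-u$ avoiding a maximal clique $X$ has its $u'$-vertex non-adjacent to some $x\in X$, so universalizing $x$ would create a new copy of $H$) gives that only $O(n^{|V(H)|-2})$ copies of $H'$ avoid $X$, hence $\cN(H,G')=O(n^{|V(H)|-1})$ and $\cN(H,G)=O(n^{|V(H)|-1})$, contradicting the $\Omega(n^{|V(H)|})$ blow-up lower bound. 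In other words, the paper shows the residual case forces $G$ itself to have few copies overall, rather than exhibiting a better graph; to repair your exchange argument you would need some analogous marginal estimate, at which point you have essentially reproduced the paper's proof.
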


\begin{proof} Let $G$ be an $n$-vertex $tK_r$-free graph containing $\ex(n,H,tK_r)$ copies of $H$. Apply Proposition \ref{coverin} for the family of $r$-cliques of $G$ to obtain a smallest set $A$ and a set $B$ such that every $K_r$ shares at least one vertex with $A$ or at least two vertices with $B$. 
If $|A|=t-1$ then $B$ is empty, thus we can connect the $t-1$ vertices of $A$ to every other vertex without creating a $tK_r$, completing the proof.

Assume that $|A|<t-1$. We erase the edges inside $B$ to obtain $G'$. Then each copy of $K_r$ in $G'$ has to intersect $A$, thus $G'$ is $(t-1)K_r$-free. Clearly $G'$ contains at least $\cN(H,G)-\binom{2tr}{2}n^{|V(H)|-2}$ copies of $H$. We will pick a vertex $v$ and connect $v$ to every other vertex. The resulting graph $G''$ is clearly $tK_r$-free, thus contains at most $\cN(H,G)$ copies of $H$. Therefore, for every $v$ we create at most $\binom{2tr}{2}n^{|V(H)|-2}$ new copies of $H$ this way.

Let $H'$ be a graph obtained by deleting a non-isolated vertex $u$ of $H$ and $u'$ be a neighbor of $u$.

Let us consider a maximal clique with vertex set $X$ in $G'$. Then $|X|\le (t-1)r-1$. Consider the copies of $H'$ in $G'$ avoiding $X$, let $m$ be their number. Each copy has a vertex $v'$ corresponding to $u'$, and $v'$ cannot be connected to every vertex of $X$. Therefore, for some $x\in X$, at least $m/|X|\ge m/tr$ copies of $H'$ avoiding $X$ have the property that the vertex corresponding to $u'$ is not connected to $x$. It means that connecting $x$ to every vertex would create a new copy of $H$ with $x$ playing the role of $u$ (it is new since the edge $v'x$ is not in $G'$. Therefore, $m/tr\le \binom{2tr}{2}n^{|V(H)|-2}$.

There are two types of copies of $H$ in $G'$. If $H'$ contains a vertex from $X$, then there are at most $(t-1)r-1\le tr$ ways to pick that vertex, and at most $n$ ways to pick the other vertices, thus there are at most $trn^{|V(H)|-1}$ such copies of $H$. If $H'$ avoids $X$, then we have $m$ ways to pick $H'$ and at most $n$ ways to pick the last vertex, thus there are at most $tr\binom{2tr}{2}n^{|V(H)|-1}$ such copies of $H$. Then $G$ contains at most $tr\cdot n^{|V(H)|-1}+\binom{2tr}{2}n^{|V(H)|-1}+\binom{2tr}{2}n^{|V(H)|-2}$ copies of $H$. On the other hand, the blow-up of $H$ contains $\Theta(n^{|V(H)|})$ copies of $H$, a contradiction if $n$ is large enough.
\end{proof}




We will use the following lemma to prove Theorem \ref{exac}.

\begin{lemma}\label{trivi}
Let $\alpha>(k-1)/k$. If $G$ is a $K_{k+1}$-free $n$-vertex graph that contains a $K_k$-free subgraph of order at least $\alpha n$, then $\cN(K_k,G)\le \cN(K_k,T(n,k))-\Theta(n^k)$. 
\end{lemma}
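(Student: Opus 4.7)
I would prove the contrapositive using a stability version of Zykov's theorem: for every $\eta>0$ there exists $\delta=\delta(\eta,k)>0$ such that every $K_{k+1}$-free $n$-vertex graph $G'$ with $\cN(K_k,G')\ge(1-\delta)\cN(K_k,T(n,k))$ admits a vertex partition $V(G')=V_1\cup\dots\cup V_k$ satisfying $\abs{|V_i|-n/k}\le\eta n$ for each $i$ and having at most $\eta n^2$ non-edges across distinct parts. This is a standard fact, derivable either from a careful quantitative Zykov symmetrization (each symmetrization step cannot destroy more than a controlled number of $K_k$'s, and only $o(n)$ steps are needed before the graph becomes complete multipartite) or from graph removal applied to Zykov's theorem.

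The key consequence is that in such a near-Tur\'an graph $G'$, every $K_k$-free vertex subset $S$ has $|S|\le(k-1)n/k+c(\eta)n$, with $c(\eta)\to 0$ as $\eta\to 0$. Suppose toward a contradiction that $|S\cap V_i|\ge\sqrt{\eta}\,n$ for every $i$. Since only $\eta n^2$ cross-edges are missing, for each $i$ at most $\sqrt{\eta}\,n$ vertices of $S\cap V_i$ are non-adjacent to more than $\sqrt{\eta}\,n$ vertices of any other part. Hence one may pick representatives $x_i\in S\cap V_i$ one part at a time so that each $x_i$ is joined to all previously chosen $x_j$; the resulting $k$-clique lies entirely inside $S$, contradicting the $K_k$-freeness of $G'[S]$. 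Thus some $|S\cap V_i|<\sqrt{\eta}\,n$, and summing the remaining parts gives $|S|\le(k-1)n/k+O(\sqrt{\eta}\,n)$.

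Given the hypothesis $\alpha>(k-1)/k$, set $\beta=\alpha-(k-1)/k>0$, choose $\eta>0$ small enough that $c(\eta)<\beta$, and let $\delta$ be the resulting stability constant. If the graph $G$ in the lemma satisfied $\cN(K_k,G)\ge(1-\delta)\cN(K_k,T(n,k))$, the second paragraph would force its largest $K_k$-free subset to contain fewer than $\alpha n$ vertices for $n$ large, contradicting the assumption that $G$ has a $K_k$-free subgraph on at least $\alpha n$ vertices. Therefore $\cN(K_k,G)\le\cN(K_k,T(n,k))-\delta\,\cN(K_k,T(n,k))$, and since $\cN(K_k,T(n,k))=\Theta(n^k)$ the loss is indeed $\Theta(n^k)$.

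The main obstacle is the stability input: if no convenient reference is at hand, one would reprove it in a short auxiliary lemma by adapting Zykov's symmetrization and tracking the number of copies of $K_k$ lost at each non-trivial symmetrization, using that the Zykov-maximum is uniquely attained by $T(n,k)$ up to an $o(n)$-vertex modification.
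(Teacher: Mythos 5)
Your contrapositive-via-stability plan is sound, but it takes a genuinely different and much heavier route than the paper. The paper's proof is direct and elementary: take $U$ to be a \emph{maximum} $K_k$-free set; by $K_{k+1}$-freeness every vertex $v\notin U$ has a $K_k$-free neighborhood, so $|N(v)|\le |U|$ by maximality, and since every copy of $K_k$ meets $V(G)\setminus U$, Zykov's theorem gives $\cN(K_k,G)\le (n-|U|)\cdot\cN(K_{k-1},T(|U|,k-1))$. This is exactly the $K_k$-count of the complete multipartite graph consisting of $T(|U|,k-1)$ plus one extra class of size $n-|U|\le (1-\alpha)n$, which is bounded away from $n/k$; moving $\frac{\alpha-(k-1)/k}{2}n$ vertices into this deficient class gains $\Theta(n^k)$ copies of $K_k$, yielding the stated deficit with explicit constants and no input beyond Zykov. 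Your route trades this maximality trick for a stability version of Zykov's theorem, which is the crux and is not actually proved in your proposal: such a statement is true and available in the literature, but your sketch that ``only $o(n)$ symmetrization steps are needed'' is not accurate as stated (the symmetrization process touches $\Theta(n)$ vertices; what one must control is the change in the $K_k$-count per step and show that near-equality forces near-multipartite structure). Your second paragraph is correct modulo constants, though the threshold $\sqrt{\eta}\,n$ is too tight for the greedy selection: in $S\cap V_i$ you must avoid roughly $2\sqrt{\eta}\,n$ vertices with many missing cross-pairs plus up to $(k-1)\sqrt{\eta}\,n$ non-neighbors of the earlier representatives, so the threshold should be, say, $(k+2)\sqrt{\eta}\,n$, which still yields $|S|\le (k-1)n/k+O(\sqrt{\eta})\,n$ and the same contradiction. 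In summary: your approach is correct in outline and would generalize to settings where no maximum-$K_k$-free-set argument is available, but as written it rests on a black-boxed stability theorem whose proof is more work than the half-page self-contained argument the lemma actually admits.
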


\begin{proof}
Let $U$ be a $K_k$-free subgraph of $G$ of the largest order. 
Let $v$ be a vertex outside $U$. The neighborhood of $v$ is $K_k$-free, thus has order at most $|U|$. Therefore, $v$ is in at most $\cN(K_{k-1},T(|U|,k-1))$ copies of $K_k$. As each copy of $K_k$ contains a vertex outside $U$, there are at most $\cN(K_{k-1},T(|U|,k-1))(n-|U|)$ copies of $K_k$ in $G$. 
We have equality if $U$ induces the Tur\'an graph $T(|U|,k-1)$, and we add the remaining vertices as one more class. By the assumption on $|U|$, this last class $A=V(G)\setminus U$ has order at most $(1-\alpha) n$. Let us move $\frac{\alpha-(k-1)/k}{2}n$ vertices from another class $B$ to $A$ and let $G'$ be the resulting graph. The number of edges between $A$ and $B$ increases by $\Theta(n^2)$. If a copy of $K_k$ in $G$ does not contain vertices from both $A$ and $B$, it is also in $G'$. The other copies of $K_k$ in $G$ contain an edge between $A$ and $B$, and there are $\Theta(n^{k-2})$ ways to pick the other vertices. The number of ways to pick the other vertices does not change and the number of edges between $A$ and $B$ increases by $\Theta(n^2)$, thus the number of copies of $K_k$ increases by $\Theta(n^k)$. Therefore, $\cN(K_k,G)\le \cN(K_k,G')-\Theta(n^k)\le \ex(n,K_k,K_{k+1})-\Theta(n^k)= \cN(K_k,T(n,k))-\Theta(n^k)$, completing the proof.
\end{proof}

We will prove Theorem \ref{exac} and Theorem \ref{asym} together. More precisely, we start with proving Theorem \ref{asym}, and afterwards we continue using the same notation to improve the bound to an exact result in the graph case. For this reason, we denote some hypergraphs with printed capital letters in the following proof. 

Now we restate Theorems \ref{exac} and \ref{asym} together. Recall that  $\cT$  is a$K_{x+1}^p$-free $p$-graph on $n-t(r-x)+1$ vertices with the most copies of $K_x^p$; in the case $p=2$ by Zykov's theorem $\cT=T(n-t(r-x)+1,x)$. 

\begin{theorem*}
Let $tr>s\ge r\ge 2$ and $t\ge 1$ be arbitrary integers and let $x=\lceil \frac{tr-s}{t-1}\rceil-1$. Then we have $\ex(n,K_s^p,tK_r^p)=(1+o(1))\cN(K_s^p,K_{t(r-x)-1}^p+\cT)$, and in the case $p=2$ for sufficiently large $n$ we have $\ex(n,K_s,tK_r)=\cN(K_s,K_{t(r-x)-1}+T(n-t(r-x)+1,x))$.
\end{theorem*}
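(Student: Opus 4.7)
The plan is to prove the asymptotic ($p$-graph) and exact (graph) statements together, with a single argument that isolates a small dominating set $D$ in any extremal $\cH$ using Proposition~\ref{coverin}, and then pins $\cH - D$ down to the Tur\'an graph in the $p=2$ case via Lemma~\ref{trivi}.

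\textbf{Lower bound.} I first verify that $G^{\star}:=K_{t(r-x)-1}^p+\cT$ is $tK_r^p$-free: since $\cT$ is $K_{x+1}^p$-free, each $K_r^p$ in $G^{\star}$ uses at most $x$ vertices of $\cT$ and hence at least $r-x$ dominating vertices, so $t$ vertex-disjoint copies would require $t(r-x)$ dominating vertices, one more than available. Counting copies of $K_s^p$ in $G^{\star}$, the leading term is $\binom{t(r-x)-1}{s-x}\cN(K_x^p,\cT)=\Theta(n^x)$, obtained by combining an $x$-clique inside $\cT$ with $s-x$ dominating vertices.

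\textbf{Upper bound (Theorem~\ref{asym}).} Let $\cH$ be a $tK_r^p$-free $n$-vertex $p$-graph with $\cN(K_s^p,\cH)=\ex(n,K_s^p,tK_r^p)$. I apply Proposition~\ref{coverin} to the family of vertex sets of $K_r^p$-copies in $\cH$, use the moreover clause together with the observation after Proposition~\ref{coverin} (which allows joining $A$ to the rest of $V(\cH)$ when $B=\emptyset$), and iterate inside the residual to extract a set $D\subseteq V(\cH)$ of size at most $t(r-x)-1$ such that $\cH - D$ is $K_{x+1}^p$-free. With $D$ in hand, any $K_s^p$ in $\cH$ has at most $x$ vertices outside $D$ (else $\cH - D$ would contain $K_{x+1}^p$), so
\[
\cN(K_s^p,\cH)\;\le\;\sum_{a\ge s-x}\binom{|D|}{a}\cN(K_{s-a}^p,\cH - D).
\]
The main term $a=s-x$ is at most $\binom{t(r-x)-1}{s-x}\cN(K_x^p,\cT)$ by the extremality of $\cT$, while the terms with $a>s-x$ contribute $O(n^{x-1})=o(n^x)$, giving Theorem~\ref{asym}.

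\textbf{Exact equality for $p=2$ (Theorem~\ref{exac}).} Continuing with the same $\cH$ and $D$, Zykov's theorem identifies $\cT$ as $T(n-t(r-x)+1,x)$. To upgrade $(1+o(1))$ to equality, I apply Lemma~\ref{trivi} with $k=x$ to $\cH - D$: if $\cH - D$ is not the Tur\'an graph $T(n-|D|,x)$, it contains a $K_x$-free subgraph of order exceeding $\frac{x-1}{x}(n-|D|)$, and the lemma forces $\cN(K_x,\cH - D)\le \cN(K_x,T(n-|D|,x))-\Theta(n^x)$. This slack of $\Theta(n^x)$ swamps any $O(n^{x-1})$ gain available elsewhere in the $K_s$-count, so at the extremum $\cH - D=T(n-|D|,x)$. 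An analogous slack argument (as in the proof of Proposition~\ref{struc}) then fixes $|D|=t(r-x)-1$ and forces $D$ to be joined to every other vertex, yielding $\cH=K_{t(r-x)-1}+T(n-t(r-x)+1,x)$. The hardest part is producing $D$ of the correct size with $K_{x+1}^p$-free residual: a single application of Proposition~\ref{coverin} gives only $|A|\le t-1$ with $\cH - A$ merely $K_r^p$-free, which suffices when $s=r$ (so $x=r-1$) but not when $s>r$, so the peeling must be managed so that the total count of extracted ``dominating'' vertices matches $t(r-x)-1$ precisely and the residual is $K_{x+1}^p$-free --- this is where the $tK_r^p$-free hypothesis is used most essentially, and where Lemma~\ref{trivi} becomes indispensable for the exact statement.
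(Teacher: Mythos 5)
There is a genuine gap, and it sits exactly where you admit the proof is hardest: the extraction of a bounded set $D$ whose removal leaves a $K_{x+1}^p$-free residual. Proposition \ref{coverin} applied to the vertex sets of $K_r^p$-copies only yields $A,B$ of bounded size such that the residual is $K_r^p$-free, and no iteration of that proposition can upgrade this to $K_{x+1}^p$-freeness: a $K_r^p$-free ($p$-)graph can contain $\Omega(n)$ pairwise disjoint copies of $K_{x+1}^p$ (already $T(n,r-1)$ does for $x+1\le r-1$), so the hypothesis of Proposition \ref{coverin} fails for the family of $K_{x+1}^p$-copies in the residual and the ``peeling'' has no engine. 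The paper's actual key move, which your proposal never makes, is to apply Proposition \ref{coverin} not to cliques of a fixed size but to the family $\cH$ of $(x+1)$-sets that \emph{extend to a $K_s^p$}; bounded matching number for this family is nontrivial and rests on the arithmetic $t(r-x-1)\le s-x-1$ (via an auxiliary graph and Tur\'an's theorem, or rainbow matchings in Theorem \ref{magni}). Moreover, even after that, the paper never obtains a global $K_{x+1}^p$-free residual at the asymptotic stage: it only gets $K_{x+1}^p$-freeness of the common neighborhood $U$ of each good $(s-x)$-set, and the coefficient $\binom{t(r-x)-1}{s-x}$ comes from applying the Liu--Wang result (Lemma \ref{hgt}) to the $(r-x)$-uniform family $\cP$ of subsets of $A\cup B$ with many extensions, using the Claim that $\cP$ has no $t$ disjoint members. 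Your proposal asserts $|D|\le t(r-x)-1$ outright, which is precisely the content of Lemma \ref{hgt} plus that Claim, and neither appears in your argument.

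The exactness step also misuses Lemma \ref{trivi}. Your dichotomy ``if $\cH - D$ is not Tur\'an, it contains a $K_x$-free subgraph of order exceeding $\frac{x-1}{x}(n-|D|)$, hence a $\Theta(n^x)$ deficit'' is false: a complete $x$-partite graph with parts of size $n/x\pm n^{1/2}$ is not the Tur\'an graph, its largest $K_x$-free subgraph has order only $\frac{x-1}{x}n+O(n^{1/2})$ (so Lemma \ref{trivi} does not apply for any fixed $\alpha>(x-1)/x$), and its $K_x$-deficit is $o(n^x)$, far too small to ``swamp'' competing gains. The paper instead uses Lemma \ref{trivi} only locally, to show each vertex $u\notin C\cup D$ has at most $(x-1)n/x+o(n)$ neighbours in $D$, whence $u$ lies in at most $(1+o(1))\binom{t(r-x)-2}{s-x}\cN(K_{x-1},T(\lfloor (x-1)n/x\rfloor,x))$ copies of $K_s$ (the coefficient drops to $t(r-x)-2$ because $u$ misses at least one vertex of $C$); the exact result then follows from a delete--replace--re-add comparison in which re-added Tur\'an vertices gain copies with the larger coefficient $\binom{t(r-x)-1}{s-x}$, together with Zykov's theorem and the equality analysis of Lemma \ref{hgt} producing the clique $C$ and its near-full common neighbourhood $D$. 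Your lower-bound paragraph is fine and matches the paper, but the two central mechanisms of the upper bound --- the auxiliary family of extendable $(x+1)$-sets and the Liu--Wang count on $\cP$ --- are missing, so the proposal does not constitute a proof.
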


\begin{proof}     If we take $t$ vertex-disjoint cliques in $K_{t(r-x)-1}^p+\cT$, they together have at most $t(r-x)-1$ vertices from $K_{t(r-x)-1}^p$ and at most $tx$ vertices from $\cT$, thus less than $tr$ vertices altogether, showing that $K_{t(r-x)-1}^p+\cT$ is $tK_r^p$-free.

    For the upper bound, let $n$ be large enough and $G$ be an $n$-vertex $tK_r^p$-free $p$-graph with $\ex(n,K_s^p,tK_r^p)$ copies of $K_s^p$. Let $\cH$ denote the family of $(x+1)$-element sets of vertices that extend to a $K_s^p$. In other words, we place the vertex set of a $K_{x+1}^p$ into $\cH$ only if it is contained in a $K_s^p$.

    We claim that there is a constant $c=c(s,t,r)$ such that $\cH$ contains less than $c$ pairwise disjoint sets. 
Indeed, assume that $H_1,\dots,H_c$ are pairwise disjoint members of $\cH$. Let us pick a copy of $K_s^p$ extending $H_i$ for every $i$ and denote its vertex set by $Q_i$. Let us consider an auxiliary graph $G_0$ with vertex set $H_1,\dots,H_c$ and connect $H_i$ to $H_j$ if $Q_i$ intersects $H_j$ or $Q_j$ intersects $H_i$. Then the average degree is at most $s-x+1$ in $G_0$, since $Q_i$ intersects at most $s-x+1$ sets $H_j$.
If $c$ is large enough, we can pick an independent set $T$ of order $t$ in $G_0$, by applying Tur\'an's theorem to the complement of $G_0$. Now we can greedily extend each $H_i$ from $T$ to a $K_r^p$ using the $s-x-1$ vertices from $Q_i$, such that the resulting $t$ copies of $K_r^p$ are pairwise disjoint. Indeed, altogether we use $t(r-x-1)=t(r-\lceil\frac{tr-s}{t-1}\rceil)=tr-s-(t-1)\lceil\frac{tr-s}{t-1}\rceil+s-\lceil\frac{tr-s}{t-1}\rceil\le s-\lceil\frac{tr-s}{t-1}\rceil=s-x-1$ vertices. We go through the sets in $T$ in an arbitrary order and add $r-x-1$ new vertices. Even for the last $H_i$, we have at least $r-x-1$ unused vertices in $Q_i$.

 Applying Proposition \ref{coverin}, we obtain that there is a set $A$ of at most $c-1$ vertices and a set $B$ of at most $2cr$ vertices such that every $K_{x+1}^p$ that extend to a $K_s^p$ in $G$ either contains at least one element of $A$, or two elements of $B$. If a $K_s^p$ contains less than $s-x$ elements of $A\cup B$, then there is a $K_{x+1}^p$ in it completely avoiding $A\cup B$, a contradiction. Therefore, every $K_s^p$ contains at least $s-x$ elements of $A\cup B$. Observe that there are $O(1)$ ways to pick these elements, and at most $n$ ways to pick the other elements. This shows that $\ex(n,K_s^p,tK_r^p)=\Theta(n^x)$, giving a simpler proof of the theorem of Gerbner, Methuku and Vizer \cite{gmv} mentioned earlier, and extending it to hypergraphs.

 Let $G'$ be the $p$-graph obtained by removing $A\cup B$ from $G$.
For an $(r-x)$-subset $P\subset A\cup B$, we let $\cH(P)$ denote the family of $x$-sets in $V(G)\setminus (A\cup B)$ that extend to a $K_r^p$ with $P$. Let $\cP$ be the family of $(r-x)$-subsets of $A\cup B$ with $|\cH(P)|\ge n^{x-1/2}$.

\begin{clm}
There are no $t$ pairwise disjoint sets in $\cP$.
\end{clm}

\begin{proof} Let as assume indirectly that $P_1,\dots,P_t\in \cP$ are pairwise disjoint.
By Proposition \ref{coverin}, every $\cH(P)$ with $P\in \cP$ contains $tx$ pairwise disjoint members. For $P_1\in \cP$, we pick an arbitrary member $H_1$ of $\cH(P)$. Then $H_1\cup P_1$ intersects at most $x$ of the $tx$ pairwise disjoint members of $\cH(P_2)$, thus we can pick $H_2\in \cH(P_2)$ avoiding $H_1$. We continue this way, in general for $i\le t$ we pick $H_i\in \cH(P_i)$ avoiding $H_1\cup H_2\dots\cup\dots H_{i-1}$, this is doable since each $H_j$ intersects at most $x$ sets out of the $tx$ pairwise disjoint members of $\cH(P_i)$. Then the sets $P_i\cup H_i$ are pairwise disjoint and each induces a $K_r^p$ in $G$, a contradiction.
\end{proof}

Let us return to the proof of the theorem. Let $\cQ$ be the family of vertex sets of copies of $K_s^p$ in $V(G)$ that intersect $A\cup B$ in the vertex set of a $K_{s-x}^{r-x}$ of $\cP$,
and $\cQ'$ denote the family of vertex sets of other copies of $K_s^p$ in $V(G)$. By the above Claim and Lemma \ref{hgt}, there are at most $\binom{t(r-x)-1}{s-x}$ 
copies of $K_{s-x}^{r-x}$ in $\cP$. Let $K$ be the vertex set of a copy of $K_{s-x}^{r-x}$ in $\cP$ and consider how many ways we can extend $K$ with $x$-sets from $G'$ to obtain a member of $Q$. Let $U$ be the set of vertices outside $A\cup B$ such that every $p$-set intersecting both $K$ and $U$ is a hyperedge of $G$. To extend $K$ to a member of $Q$, we have to take a $K_x^p$ in $U$.
Clearly $|U|\le n-|A\cup B|$ and there is no $K_{x+1}^p$ in $U$. Indeed, that $K_{x+1}^p$ would extend to a $K_s^p$ with any $s-x-1$ vertices from $K$, thus it is in $\cH$, hence it intersects $A\cup B$, a contradiction. Therefore, there are at most $\cN(K_x^p,\cT)$ $x$-sets that extend $K$ to a member of $Q$.

Consider now members of $\cQ'$. They each intersect $A\cup B$ in at least $s-x$ vertices. There are $O(n^{x-1})$ members where this intersection has order more than $s-x$, as there are $O(1)$ ways to pick the vertices from $A\cup B$, and at most $n$ ways to pick each remaining vertex. Those members of $\cQ'$ that intersect $A\cup B$ in an $(s-x)$-set $S$ each contain an $(r-x)$-subset $P\subset S$ not in $\cP$, thus with $|\cH(P)|<n^{x-1/2}$. Observe that each $x$-set that extends $S$ to a $K_s$ is in $\cH(P)$, thus there are less that $n^{x-1/2}$ ways to extend each $(s-x)$-subset of $A\cup B$ to a member of $\cQ'$. We obtained $|\cQ'|=O(n^{x-1/2})$.
This implies the asymptotically sharp upper bound $(1+o(1))\binom{t(r-x)-1}{s-x}\cN(K_x^p,\cT)$. 

Let us assume from now on that $p=2$.
So far we obtained the asymptotically sharp upper bound $(1+o(1))\binom{t(r-x)-1}{s-x}\cN(K_x,T(n-|A\cup B|,x))$. Observe that we lose a constant factor of the copies of $K_r$ in the case there are less than $\binom{t(r-x)-1}{s-x}$ $(s-x)$-sets in $A\cup B$ with each $r$-subset in $\cP$. Therefore, we are done with the proof unless we have equality when applying Lemma \ref{hgt} to $\cP$. 

In that case, there is a set $C\subset A\cup B$ of $t(r-x)-1$ vertices in $G$ such that every $(r-x)$-subset of $C$ is contained in at least $n^{x-1/2}$ copies of $K_r$. In particular, they form a $K_{t(r-x)-1}$. We also lose a constant factor of the copies of $K_r$ in the case any of the $(s-x)$-subsets of $C$ is extended to a $K_s$ only by $\cN(K_x,T(n,x))-\Omega(n^x)$ copies $K_x$ in $V(G)\setminus (A\cup B)$. In particular, if the common neighborhood of the $K_{s-x}$ has order $n-\Omega(n)$, then this is the case. Thus we are done, unless each vertex of $C$ is connected to $n-o(n)$ vertices, which implies that the common neighborhood $D$ of $C$ contains $n-o(n)$ vertices. Clearly there are $\Theta(n^x)$ copies of $K_x$ in $D$, otherwise $|\cQ|=o(n^x)$, which would imply that $\cN(K_s,G)=o(n^x)$.

\begin{clm}
    $D$ is $K_{x+1}$-free.
\end{clm}
\begin{proof}  
 In $D$ there are no $t-1$ copies of $K_x$ and a copy of $K_{x+1}$ that are pairwise vertex-disjoint, because we could pick $r-x$ or $r-x-1$ vertices from $C$ vertex-disjointly to extend them to $tK_r$. If there is a copy $K$ of $K_{x+1}$ in $D$, then clearly there are $O(n^{x-1})$ copies of $K_x$ in $D$ intersecting $K$. Also, the family of copies of $K_x$ inside $D$ that is disjoint from $K$ does not contain $t-1$ pairwise vertex disjoint sets, thus we can apply Proposition \ref{coverin} to show that there are $O(n^{x-1})$ copies of $K_x$ in $D$ avoiding $K$. Therefore, there are $O(n^{x-1})$ copies of $K_x$ in $D$, a contradiction.
\end{proof}

Let us return to the proof of the theorem. We claim that every member of $\cH$ intersects $C$. Assume that there is a $K_s$ with vertex set $S$ containing $x+1$ elements not in $C$. Observe that $D$ contains $(t+s)K_x$, as otherwise $D$ contains $O(n^{x-1})$ copies of $K_x$ by Lemma \ref{coverin} and we are done. Then we have $(t-1)K_x$ vertex disjoint from $S$. We also have $(t-1)(r-x)$ vertices in $C\setminus S$, but then we can add $r-x$ distinct ones to each of the $t-1$ copies of $K_x$ to obtain $(t-1)K_r$ plus a $K_s$ vertex-disjointly. This configuration contains $tK_r$, a contradiction.

Let us consider $u\not\in C\cup D$, and let $\cS$ denote the family of vertex sets of the copies of $K_s$ that contain $u$.  Observe that $u$ is connected to at least $s-x$ vertices of $C$, as otherwise any $S\in \cS$ contains at least $x+1$ vertices not in $C$, giving us a member of $\cH$ avoiding $C$, a contradiction. This implies that the neighborhood of $u$ in $D$ is $K_x$-free. Indeed, such a $K_x$ with $u$ would extend to $K_s$ with $s-x-1$ neighbors of $u$ in $C$, giving us a member of $\cH$ avoiding $C$, a contradiction.  The number of sets $S$ in $\cS$ with  $|S\cap D|<x-1$ is $o(n^{x-2})$. Consider the sets $S$ in $\cS$ with  $|S\cap D|=x-1$, and let $\cS'$ denote the family of those intersections.

If $u$ has at least $\alpha n$ neighbors in $D$ for some $\alpha>(x-1)/x$, then by Lemma \ref{trivi} the number of copies of $K_x$ in $D\cup \{u\}$ is at most $\cN(K_x,T(n,x))-\Theta(n^x)$, a contradiction. Therefore, $u$ has at most $(x-1)n/x+o(n)$ neighbors in $D$, hence there are at most $\cN(K_{x-1},T((x-1)n/x+o(n),x))=\cN(K_{x-1},T(\lfloor (x-1)n/x\rfloor,x))+o(n^{x-1})$ copies of $K_x$ that contain $u$ and a $K_{x-1}$ from $D$. The other copies of $K_x$ that contain $u$ also contain a vertex not in $D$, thus there are $o(n^{x-1})$ such copies. This means that the number of copies of $K_s$ containing $u$ is at most $(1+o(1))\binom{t(r-x)-2}{s-x}\cN(K_{x-1},T(\lfloor (x-1)n/x\rfloor,x))$, since $u$ is connected to at most $t(r-x)-2$ vertices of $C$.

Let us first delete all the vertices of $V(G)\setminus (C\cup D)$, this way we remove at most $(1+o(1))|V(G)\setminus (C\cup D)|\binom{t(r-x)-2}{s-x}\cN(K_{x-1},T(\lfloor (x-1)n/x\rfloor,x))$ copies of $K_s$. We replace the resulting graph $G''$ with $K_{t(r-x)-1}+T(|D|,x)$, this way the number of copies of $K_s$ does not decrease. Indeed, each $K_s$ inside $C\cup D$ is built by taking a set of at least $s-x$ vertices from $C$ and a clique from $D$, and $D$ is $K_{x+1}$-free. Thus we can apply Zykov's theorem to show that the number of cliques of any order in $D$ does not decrease with this change from $G''$ to $K_{t(r-x)-1}+T(|D|,x)$. As $G''$ contains all the possible edges outside $D$, the number of copies of $K_s$ does not decrease.

After that, we add $|V(G)\setminus (C\cup D)|$ vertices to $T(|D|,x)$ and edges to obtain a Tur\'an graph on $n-t(r-x)+1$ vertices. Each vertex added creates $(1+o(1))|V(G)\setminus (C\cup D)|\binom{t(r-x)-1}{s-x}\cN(K_{x-1},T(\lfloor (x-1)n/x\rfloor,x))$ new copies of $K_s$, thus the number of copies of $K_s$ is more in this Tur\'an graph than in $G$, a contradiction completing the proof.
\end{proof}





We continue with the proof of Theorem \ref{magni}. We will use rainbow matchings in the proof. For a collection of matchings $M_1,\dots,M_k$ in an $r$-uniform hypergraph, another matching $M$ is \textit{rainbow} if for its hyperedges $m_1,\dots,m_t$ there are distinct matching $M_{i_1},\dots,M_{i_t}$ with $m_j\in M_{i_j}$.
Let $F(r,t)$ denote the maximum $k$ for which there exists a collection of $k$ matchings, each of size $t$, in some $r$-uniform hypergraph, such that there is no rainbow matching of size $t$. This problem was first investigated by Alon \cite{alon}, after Aharoni and Berger \cite{AB} studied it in $r$-partite hypergraphs. For bounds on $F(r,t)$ see e.g. \cite{psz} and the citations within. We only need that there is an upper bound on $F(r,t)$ that does not depend on $n$. Let us now restate Theorem \ref{magni} for convenience.

\begin{theorem*}
        For every $p$-graph $H$, we have $\ex(n,H,tK_r^p)=\Theta(n^{b(H)})$. 
\end{theorem*}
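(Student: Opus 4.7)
For the lower bound, I would take a subset $U\subset V(H)$ of size $b(H)$ witnessing the definition, so that the partial $(t,U)$-blowup of $H$ is $tK_r^p$-free (and hence so is the $(m,U)$-blowup for every $m$, by the remark that $m=t$ suffices). Choosing $m$ so that $m|U|+|V(H)\setminus U|=n$, the resulting $n$-vertex $p$-graph is $tK_r^p$-free and contains at least $m^{b(H)}=\Theta(n^{b(H)})$ copies of $H$: for each choice of one clone per vertex of $U$, with $V(H)\setminus U$ fixed in its canonical position, one obtains a distinct copy of $H$.

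For the upper bound, let $G$ be a $tK_r^p$-free $n$-vertex $p$-graph. Fix $U^*\subset V(H)$ with $|U^*|=b(H)+1$, and use the observation preceding the theorem to obtain vertex-disjoint cliques $C_1,\dots,C_t\subset V(H)\setminus U^*$ and subsets $S_i\subset U^*$ such that $T_i:=C_i\cup S_i$ spans a $K_r^p$ in $H$. For every embedding $\phi\colon V(H)\to V(G)$, the $t$ copies $\phi(T_i)$ are $K_r^p$-copies in $G$ that pairwise intersect only in $\phi(U^*)$. Apply Proposition~\ref{coverin} to the family of vertex sets of $K_r^p$-copies of $G$ to obtain $A\subseteq V(G)$ with $|A|\le t-1$ and $B\subseteq V(G)$ with $|B|\le 2r(t-1)$ such that every $K_r^p$-copy of $G$ contains a vertex of $A$ or two vertices of $B$.

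Each $\phi(T_i)$ must then contain an $A$-vertex or two $B$-vertices. Since $|A|<t$, if every $\phi(T_i)$ were anchored to $A$, two of them would share an $A$-vertex, which must lie in $\phi(T_i)\cap\phi(T_j)\subset\phi(U^*)$. Hence for every embedding $\phi$ either (I) $\phi(u)\in A$ for some $u\in U^*$, or (II) $|\phi^{-1}(B)\cap T_i|\ge 2$ for some $i$. Each embedding is described by one of $O(1)$ \emph{anchor patterns}: a pair $(u,a)\in U^*\times A$ in Case~I, or a pair of distinguished vertices of some $T_i$ together with their images in $B$ in Case~II.

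The final step is to bound the number of embeddings consistent with each fixed anchor pattern by $O(n^{b(H)})$, and here the rainbow matching bound $F(r,t)<\infty$ enters. For a fixed anchor, I would organise the surviving embeddings into matchings of size $t$ in the $r$-uniform hypergraph of $K_r^p$-copies of $G$: each embedding $\phi$ supplies its $t$ ``bodies'' $\phi(C_i)$, which are pairwise disjoint in $V(G)\setminus\phi(U^*)$ by the observation, and these can be extended to a genuine matching of $K_r^p$-copies in $G$ using the anchor and the $\phi(U^*)$-image. If the number of such embeddings exceeded $F(r,t)$ times an explicit $O(n^{b(H)})$ factor, more than $F(r,t)$ such matchings would coexist, forcing a rainbow matching of size $t$, i.e., $t$ pairwise disjoint $K_r^p$-copies in $G$ — contradicting the $tK_r^p$-freeness of $G$. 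Summing over the $O(1)$ anchor patterns yields $\cN(H,G)=O(n^{b(H)})$. The main obstacle is precisely this organisation step: choosing bodies and $U^*$-image representatives so that each matching is genuinely pairwise disjoint and a rainbow selection across distinct matchings yields truly vertex-disjoint $K_r^p$-copies in $G$; this requires exploiting both the vertex-disjointness of the $C_i$'s in $V(H)\setminus U^*$ and the constant size of the anchor to control the overlap pattern of the various $\phi(U^*)$-images.
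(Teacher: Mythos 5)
Your lower bound is exactly the paper's argument and is fine. The upper bound, however, has a genuine gap, and it sits precisely where you flag an ``obstacle'': the step bounding the number of embeddings per anchor pattern by $O(n^{b(H)})$. You apply Proposition~\ref{coverin} to the family of $K_r^p$-copies of $G$, which pins only one vertex (Case~I) or two vertices (Case~II) of each embedding of $H$. That yields only $O(n^{|V(H)|-1})$ embeddings per anchor, and the entire difficulty of the theorem --- getting from exponent $|V(H)|-1$ down to $b(H)$ --- is deferred to the unproven ``organisation step''. That step, as sketched, does not work: the bodies $\phi(C_i)$ of embeddings sharing an anchor form matchings, but a rainbow matching among bodies coming from \emph{different} embeddings does not produce $t$ pairwise disjoint copies of $K_r^p$ in $G$, because each body needs its own $\phi(S_i)\subseteq\phi(U^*)$ to complete it to a $K_r^p$, and the sets $\phi(U^*)$ vary with $\phi$ and may intersect the bodies and the $U^*$-images of the other embeddings. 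The disjointness you have on the $H$-side (the $C_i$'s avoid $U^*$) says nothing about disjointness across distinct embeddings in $G$. Moreover, the implication ``more than $F(r,t)\cdot O(n^{b(H)})$ embeddings force $F(r,t)+1$ coexisting matchings of the required kind'' is asserted, not established.

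The paper resolves exactly this by applying Proposition~\ref{coverin} to a different family: let $\cH$ be the family of $(b(H)+1)$-element vertex sets of $G$ that lie in a copy of $H$. One first shows $\cH$ has bounded matching number: given pairwise disjoint $H_1,\dots,H_c\in\cH$ with extensions $Q_i$, an auxiliary graph $G_0$ (join $H_i$ to $H_j$ when $Q_i$ meets $H_j$ or vice versa; average degree at most $|V(H)|-b(H)+1$) yields $F(r,t)+1$ indices with $\bigcup H_i$ disjoint from $\bigcup Q_i$; by the observation preceding the theorem each $Q_i$ carries $t$ disjoint cliques $A_i^j$ each completable to $K_r^p$ \emph{inside its own private} $H_i$, so these form $F(r,t)+1$ matchings (after padding to size $r$), a rainbow matching gives disjoint $A_i^{j_i}$, and $H_i\cup A_i^{j_i}$ are then genuinely pairwise disjoint, producing $tK_r^p$ --- this privacy of the completing sets is exactly what your cross-embedding construction lacks. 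Proposition~\ref{coverin} applied to $\cH$ then gives constant-size $A,B$ met suitably by every member of $\cH$, so every copy of $H$ has at least $|V(H)|-b(H)$ vertices inside $A\cup B$: $O(1)$ choices for those and $O(n^{b(H)})$ for the rest, which is the bound you were missing. In short, your ingredients (Proposition~\ref{coverin}, $F(r,t)$, the blowup observation) are the right ones, but they must be assembled so that the covering proposition acts on $(b(H)+1)$-sets extendable to $H$, not on single $K_r^p$-copies.
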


\begin{proof}
    To prove the lower bound, observe that for any $U\subset V(H)$, the partial $(n/|V(H)|,U)$-blowup of $H$ has at most $n$ vertices and contains $\Omega(n^{|U|})$ copies of $H$.

    To prove the upper bound, we proceed similarly to the proof of Theorems \ref{exac} and \ref{asym}. Let $G$ be an $n$-vertex $tK_r^p$-free $p$-graph and $n$ be sufficiently large. Let $\cH$ denote the family of $(b(H)+1)$-element sets of vertices that are contained in a copy of $H$. 

    We claim that there is a constant $c=c(H,t,r,p)$ such that $\cH$ contains less than $c$ pairwise disjoint sets. Indeed, assume that $H_1,\dots,H_c$ are pairwise disjoint members of $\cH$. Let us pick a copy of $H$ extending $H_i$ for every $i$ and denote its vertex set by $Q_i$. Recall that for any fixed subhypergraph of $H$ on $b(H)+1$ vertices, there are $t$ vertex-disjoint cliques in $H$ that extend to $K_r^p$ with those. It implies that $Q_i[G]$ has $t$ vertex-disjoint subhypergraphs $A_i^1\dots A_i^t$ such that each $A_i^j$ extends to a $K_r^p$ with a subhypergraph of $H_i$ in $G$.
    
    Let us consider an auxiliary graph $G_0$ with vertex set $\cH$ and connect $H_i$ to $H_j$ if $Q_i$ intersects $H_j$ or $Q_j$ intersects $H_i$. Then the average degree in $G_0$ is at most $|V(H)|-b(H)+1$, since $Q_i$ intersects at most $|V(H)|-b(H)+1$ sets $H_j$. If $c$ is large enough, we can pick an independent set $T$ of order $F(r,t)+1$ in $G_0$. That means we have $H_1,\dots,H_{F(r,t)+1}$ such that $\cup_{i=1}^{F(r,t)+1}H_i$ and $\cup_{i=1}^{F(r,t)+1}Q_i$ are disjoint. For each $A_i^j$ with $H_i\in T$ we add $r-|A_i^j|$ new vertices. This way, for each $i$, the sets $A_i^j$ form an $r$-uniform matching of size $t$, thus by the definition of $F(r,t)$, we can find a rainbow matching of size $t$. This is, without loss of generality, the sets $A_1^{j_1},\dots,A_t^{j_t}$ are pairwise disjoint. Then the pairwise disjoint sets $H_1\cup A_1^{j_1},\dots,H_t\cup A_t^{j_t}$ each contain a $K_r^p$, thus there is a $tK_r^p$ in $G$, a contradiction.


    Now we can apply Proposition \ref{coverin} to show sets $A$ and $B$ of order $O(1)$ such that each member of $\cH$ contain at least one element of $A$ or at least two elements of $B$. This implies that each copy of $H$ contains at least $|V(H)|-b(H)$ vertices from $A\cup B$. Those vertices can be chosen $O(1)$ ways, the other at most $b(H)$ vertices can be chosen $O(n^{b(H)})$ ways, completing the proof.
\end{proof}

\vskip 0.3truecm

\textbf{Funding}: Research supported by the National Research, Development and Innovation Office - NKFIH under the grants SNN 129364, FK 132060, and KKP-133819.

\vskip 0.3truecm



\begin{thebibliography}{99}

\bibitem{AB} R. Aharoni and E. Berger, Rainbow matchings in r-partite r-graphs, \textit{Electron. J. Combin}. 16 (2009), P119, 9 pp.

\bibitem{alon} N. Alon, Multicolored matchings in hypergraphs. Moscow Journal of Combinatorics and Number Theory, 1, (2011), 3--10.

\bibitem{AS}
N. Alon, C. Shikhelman, Many  copies in $H$-free graphs, \textit{Journal of Combinatorial Theory, Series B}, 121, (2016), 146--172.






\bibitem{emc} P. Erd\H os, A problem on independent r-tuples, \textit{Ann. Univ. Sci. Budapest.} \textbf{8}
(1965) 93--95.






\bibitem{ES1966} P. Erd\H os, M. Simonovits. A limit theorem in graph theory. \textit{Studia Sci.
Math. Hungar.} \textbf{1}, (1966), 51--57.

\bibitem{ES1946} P. Erd\H os, A. H. Stone. On the structure of linear graphs. \textit{Bulletin of the
American Mathematical Society} \textbf{52}, (1946), 1087--1091.





\bibitem{ger3} D. Gerbner, Some exact results for non-degenerate generalized Tur\'an problems. \textit{arXiv preprint}, (2022), arXiv:2209.03426.



\bibitem{gmv} D. Gerbner, A. Methuku, M. Vizer, Generalized Turán problems for disjoint copies of graphs, \textit{Discrete Mathematics}, \textbf{342}(11) (2019), 3130--3141.


\bibitem{gp} D. Gerbner, B. Patkós, Generalized Tur\'an results for intersecting cliques. \textit{arXiv preprint}, (2021), arXiv:2105.07297.



\bibitem{gypl} E. Gy\H ori, J. Pach, M. Simonovits, On the maximal number of certain subgraphs
in $K_r$-free graphs, \textit{Graphs and Combinatorics}, \textbf{7}(1), (1991), 31--37.



\bibitem{kk} D. Kolupaev, A. Kupavskii,  Erdős matching conjecture for almost perfect matchings. \textit{Discrete Mathematics}, \textbf{346}(4), (2023), 113304.


\bibitem{LW} E.L. Liu, J. Wang, The maximum number of cliques in hypergraphs without large matchings. The Maximum Number of Cliques in Hypergraphs without Large Matchings. \textit{The Electronic Journal of Combinatorics}, (2020),  P4.14.

\bibitem{LW2}
E.L. Liu, J. Wang, The Generalized Tur\'{a}n Problem of Two Intersecting Cliques, \textit{arXiv preprint} (2021), arXiv:2101.08004.






\bibitem{moon} J. W. Moon, On independent complete subgraphs in a graph, \textit{Canadian Journal of Mathematics}, \textbf{20} (1968), 95--102.

\bibitem{psz} C. Pohoata, L. Sauermann, D. Zakharov, Sharp bounds for rainbow matchings in hypergraphs. \textit{arXiv preprint} (2022), arXiv:2212.07580.


\bibitem{scth} Richard H. Schelp, Andrew Thomason.
A Remark on the Number of Complete and Empty Subgraphs. \textit{Comb. Probab. Comput.} \textbf{7}(2), 217--219, 1998.



\bibitem{T}
P. Tur\'an. Egy gr\'afelm\'eleti sz\'els\H o\'ert\'ekfeladatr\'ol. \textit{Mat. Fiz. Lapok}, \textbf{48}, (1941), 436--452.

\bibitem{yy} X. Yuan, W. Yang, On Generalized Turán Number of Two Disjoint Cliques. \textit{Graphs and Combinatorics}, \textbf{38}(4), (2022), 116.

\bibitem{wang} J. Wang, The shifting method and generalized Tur\'an number of matchings, \textit{European
J. Combin.} \textbf{85}, (2020), 103057.

\bibitem{zykov} A. A. Zykov. On some properties of linear complexes, \textit{Matematicheskii sbornik},
\textbf{66}(2), (1949), 163--188.
\end{thebibliography}
\end{document}